\documentclass[11pt]{my_amsart}
\usepackage{amsmath}
\usepackage{amssymb,array}
\usepackage{mathtools}
\usepackage{xfrac}
\usepackage[all,cmtip]{xy}
\usepackage{etoolbox}
\usepackage[title]{appendix}


\newtoggle{omegas}
\togglefalse{omegas}

\allowdisplaybreaks
\begin{document}

\newtheorem{thm}{Theorem}[section]
\newtheorem*{thm*}{Theorem}
\newtheorem{lem}[thm]{Lemma}
\newtheorem*{lem*}{Lemma}
\newtheorem{prop}[thm]{Proposition}
\newtheorem*{prop*}{Proposition}
\newtheorem{cor}[thm]{Corollary}
\newtheorem{defn}[thm]{Definition}
\newtheorem*{conj*}{Conjecture}
\newtheorem{conj}[thm]{Conjecture}
\theoremstyle{remark}
\newtheorem*{remark}{Remark}
\newtheorem*{question*}{Question}
\newtheorem{question}{Question}

\numberwithin{equation}{section}

\newcommand{\Z}{{\mathbb Z}} 
\newcommand{\Q}{{\mathbb Q}}
\newcommand{\R}{{\mathbb R}}
\newcommand{\C}{{\mathbb C}}
\newcommand{\N}{{\mathbb N}}
\newcommand{\NC}{{\mathcal N}}
\newcommand{\PR}{{\mathcal P}}
\newcommand{\FF}{{\mathbb F}}
\newcommand{\fq}{\mathbb{F}_q}
\newcommand{\fqtm}{\mathbb{F}_q[t]^{<m}}
\newcommand{\feq}{\overline{\mathbb F}_q}
\newcommand{\rinf}{\rightarrow \infty}
\newcommand{\partx}{\tfrac{\partial}{\partial x}}
\newcommand{\partt}{\tfrac{\partial}{\partial t}}
\newcommand{\partxf}[1]{\tfrac{\partial #1}{\partial x}}
\newcommand{\parttf}[1]{\tfrac{\partial #1}{\partial t}}

\newcommand{\rmk}[1]{\footnote{{\bf Comment:} #1}}

\renewcommand{\mod}{\operatorname{mod}}
\newcommand{\ord}{\operatorname{ord}}
\newcommand{\TT}{\mathbb{T}}
\renewcommand{\d}{{\mathrm{d}}}
\renewcommand{\^}{\widehat}
\newcommand{\HH}{\mathbb H}
\newcommand{\Vol}{\operatorname{vol}}
\newcommand{\area}{\operatorname{area}}
\newcommand{\tr}{\operatorname{tr}}
\newcommand{\norm}{\mathcal N} 
\newcommand{\intinf}{\int_{-\infty}^\infty}
\newcommand{\ave}[1]{\left\langle#1\right\rangle} 
\newcommand{\Var}{\operatorname{Var}}
\newcommand{\Prob}{\operatorname{Prob}}
\newcommand{\sym}{\operatorname{Sym}}
\newcommand{\disc}{\operatorname{disc}}
\newcommand{\delt}{\operatorname{\delta}}
\newcommand{\tdeg}{\operatorname{tot.deg}}
\newcommand{\pisc}{\operatorname{disc}_{+}}
\newcommand{\Berl}{\operatorname{Berl}}
\newcommand{\hgt}{\operatorname{ht}}
\newcommand{\gal}{\operatorname{Gal}}
\newcommand{\CA}{{\mathcal C}_A}
\newcommand{\cond}{\operatorname{cond}} 
\newcommand{\lcm}{\operatorname{lcm}}
\newcommand{\Kl}{\operatorname{Kl}} 
\newcommand{\leg}[2]{\left( \frac{#1}{#2} \right)}  
\newcommand*\rfrac[2]{{}^{#1}\!/_{#2}}

\newcommand{\sumstar}{\sideset \and^{*} \to \sum}

\newcommand{\LL}{\mathcal L} 
\newcommand{\sumf}{\sum^\flat}
\newcommand{\Hgev}{\mathcal H_{2g+2,q}}
\newcommand{\USp}{\operatorname{USp}}
\newcommand{\conv}{*}
\newcommand{\dist} {\operatorname{dist}}
\newcommand{\CF}{c_0} 
\newcommand{\kerp}{\mathcal K}

\newcommand{\fs}{\mathfrak S}
\newcommand{\rest}{\operatorname{Res}} 
\newcommand{\af}{\mathbb A} 

\newcommand{\lbb}{\left[}
\newcommand{\rbb}{\right]}
\newcommand{\lb}{\left(}
\newcommand{\rb}{\right)}
\newcommand{\al}{\alpha}
\newcommand{\eps}{\epsilon}
\newcommand{\ze}{\zeta}
\newcommand{\lam}{\lambda}
\newcommand{\ity}{\infty}

\DeclarePairedDelimiter{\floor}{\lfloor}{\rfloor}
\DeclarePairedDelimiter{\ceil}{\lceil}{\rceil}

\author[Dan Carmon]{Dan Carmon\\With appendix by Alexei Entin}
\title[Square-free values of large polynomials]
{On square-free values of large polynomials 
over the rational function field}

\address{Raymond and Beverly Sackler School of Mathematical Sciences,
Tel Aviv University, Tel Aviv 69978, Israel}
\date{\today}
\thanks{The research leading to these results has received funding from the European
Research Council under the European Union's Seventh Framework Programme
(FP7/2007-2013) / ERC grant agreement n$^{\text{o}}$ 320755.}

\begin{abstract}
 We investigate the density of square-free values of polynomials with large
 coefficients over the rational function field $\fq[t]$. Some interesting
 questions answered as special cases of our results include the density of
 square-free polynomials in short intervals, and an asymptotic for the
 number of representations of a large polynomial $N$ as a sum of a small
 $k$-th power and a square-free polynomial.
 
\end{abstract}

\maketitle 

\section{Introduction - classical open problems}\label{intro}
In this paper we establish function field analogues to certain classical open
problems in analytic number theory, such as the representation of large 
integers by a sum of a square-free integer and a $k$-th power. We replace
the large integers, by way of analogy, with polynomials of large degree over 
a fixed finite field $\fq$. Our new results will be presented
in section \ref{new res sec}. We shall first review the classical problems
whose analogues we investigate, as well as currently known partial or 
conditional results about these questions.

\subsection{Square-free values of polynomials}
An integer $n$ is called {\it square-free} if it is not divisible by the square of any integer $d > 1$.
It is well known that the ``probability'' of a large ``random'' integer to be square-free is $\frac1{\zeta(2)}$
-- more precisely, this is the density of the set of square-frees in the positive integers.
A classical problem in number theory concerns the density of square-free values of polynomials:
\begin{question}
 Let $f \in \Z[x]$ be a polynomial of degree $k$. Are there infinitely many positive integers $n$ such that $f(n)$ is square-free?
 More ambitiously, compute the density of $\{n \in \N : f(n) \text{ square-free}\}$.
\end{question}

There are two obvious obstructions for such an $f$ being square-free infinitely often. If $f$ is divisible by the square of some
non-constant polynomial $g \in \Z[x]$, then clearly $f(a)$ can only be square-free when $g(a) = \pm 1$, which occurs for only
finitely many $a$ - this is a {\it global} obstruction. On the other hand, if for some prime $p$, $f(a)$ is divisible 
by $p^2$ for every $a$, then clearly $f(a)$ is never square-free. These are the {\it local} obstructions, as they depend only 
on the behaviour of $f$ modulo prime powers.

Define for any $d > 1$,
$$\rho(d) := \#\{a \mod d : f(a) \equiv 0 \!\!\!\pmod d\}.$$
For small primes $p$, the probability that $f(a)$ is not divisible by $p^2$
is approximately $1 - \frac{\rho(p^2)}{p^2}$. Heuristically, one expects these events to be
nearly independent, hence the probability that $f(a)$ is indivisible by
$p^2$ for all primes $p$ should be $\prod_{p \in \PR} \left(1 - \frac{\rho(p^2)}{p^2}\right)$,
where $\PR$ is the set of primes. Note that not being divisible by any $p^2$ is 
equivalent to being square-free. This leads to the following conjecture:

\begin{conj}\label{sqfree values conj}
Let $f \in \Z[x]$ be a square-free polynomial of degree $k$. 
The set $\{n \in \N : f(n) \text{ square-free}\}$ is conjectured to have density
$$c_f := \prod_{p \in \PR} \left(1 - \frac{\rho(p^2)}{p^2}\right).$$
\end{conj}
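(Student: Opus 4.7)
The natural starting point is Möbius inversion via $\mathbf 1_{\text{sqfree}}(m) = \sum_{d^2 \mid m}\mu(d)$, applied to $m = f(n)$:
\begin{equation*}
S(N) := \#\{n \le N : f(n) \text{ square-free}\} \;=\; \sum_{d \ge 1}\mu(d)\, N_f(d), \qquad N_f(d) := \#\{n \le N : d^2 \mid f(n)\}.
\end{equation*}
The plan is to choose a cutoff $D = D(N)$, evaluate the contribution of $d \le D$ asymptotically, and absorb $d > D$ into an error term.

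For the small range $d \le D$, I would use $N_f(d) = \rho(d^2)\lfloor N/d^2 \rfloor + O(\rho(d^2))$, which follows because $d^2 \mid f(n)$ is a union of $\rho(d^2)$ residue classes modulo $d^2$. Multiplicativity of $\rho(d^2)$ via CRT lets one complete the Euler product, producing
\begin{equation*}
\sum_{d \le D}\mu(d)\frac{\rho(d^2)}{d^2}\,N \;=\; c_f\, N \;+\; O\!\lb N \sum_{d>D}\frac{\rho(d^2)}{d^2} \rb,
\end{equation*}
with a further $O(D^{1+\eps})$ error from the floor function. Hensel's lemma together with bounds on the multiplicities of the roots of $f$ gives $\rho(d^2) \ll d^\eps$, so both errors are $o(N)$ whenever, say, $D = N^{1-\delta}$ for some fixed $\delta > 0$.

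The whole difficulty lies in bounding the tail $\sum_{d > D} |N_f(d)|$. The range $d > C N^{k/2}$ is trivial for size reasons, since then $d^2 \mid f(n)$ forces $f(n) = 0$ for $n \le N$. The dangerous window is $N^{1/2} \lesssim d \lesssim N^{k/2}$, where $d^2 \mid f(n)$ is a genuine arithmetic constraint. For $k \le 3$ this window can be handled: Estermann's argument for $k=2$ and Hooley's geometry-of-numbers argument for $k=3$ dispose of it by exploiting that integer points $(n,m)$ with $f(n) = d^2 m$ and $m$ small are sparse. For $k \ge 4$, however, no such unconditional bound is known -- this is precisely why the statement is a conjecture and not a theorem. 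Granville showed that the $abc$ conjecture implies the tail bound by forcing medium-range square divisors of $f(n)$ to be sporadic, but I would expect any attempt of mine to get stuck exactly here: the real obstacle is the medium-range square-divisor count for $k \ge 4$, and sieve plus elementary diophantine geometry alone do not seem to suffice.
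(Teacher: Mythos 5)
The statement you were asked to ``prove'' is a conjecture, not a theorem: the paper offers no proof of Conjecture~\ref{sqfree values conj}, and explicitly records that it is open unconditionally for $k \ge 4$, with only $k \le 3$ known and the general case known only under the $abc$ conjecture (Granville). Your write-up correctly recognizes this and is therefore not a failed proof but an accurate account of where the difficulty lies; the sieve skeleton you lay out (M\"obius inversion, Hensel plus multiplicativity of $\rho$ to complete the Euler product in the small-$d$ range, and a tail bound as the true obstacle) is exactly the standard framework.

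Two small corrections to the surrounding discussion. First, the attribution: $k=2$ is due to Ricci; Estermann's theorem, which the paper cites separately, concerns the dual problem of writing $N$ as a sum of a square and a squarefree (the analogue of Question~\ref{quest sums}), not Conjecture~\ref{sqfree values conj} itself. Second, the dangerous window is narrower than you state: the elementary bound $N_f(d) \ll \rho(d^2)\bigl(N/d^2 + 1\bigr)$ already handles all $d \lesssim N^{1-\delta}$, so the range that resists elementary treatment is $N \lesssim d \lesssim N^{k/2}$ rather than $N^{1/2} \lesssim d \lesssim N^{k/2}$. This makes the dichotomy cleaner: for $k=1$ that window is empty (hence the result is classical), for $k=2$ it is a single dyadic scale, for $k=3$ Hooley's argument handles it, and for $k\ge 4$ the window is genuinely wide and no unconditional method is known. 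With those adjustments, your assessment of the obstruction and of the role of the $abc$ conjecture matches the paper's.
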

Note that if there is a local obstruction at a prime $p$, then $\rho(p^2) = p^2$ and the above product is $0$. Otherwise,
it is easily seen that $\rho(p^2) \le k$ for $p$ sufficiently large, hence the infinite product converges, and $c_f$
is positive.

For $k=1$, the conjecture is equivalent to the regular density of the square-frees.
The conjecture has been proved for $k=2$ by
Ricci in the 1930's \cite{Ricci}, and for $k=3$ by Hooley in 1968 \cite{Hooley}. 
Unconditionally, the conjecture
remains completely open for all $k \ge 4$. However, in \cite{Granville},
Granville proved the conjecture in full generality, assuming the $ABC$ conjecture.

\subsection{A dual problem}
In the previous section we considered the density of square-free values of a fixed polynomial with fixed coefficients,
as the argument grows larger and larger. What happens if we allow the polynomial to vary, with coefficients tending to infinity,
possibly faster than the arguments? An example of this kind of question is the following:
\begin{question}\label{quest sums}
 Does every sufficiently large $N \in \Z$ admit a representation as a sum $N = x^k + r$ of a positive $k$-th power and a positive square-free?
 How many such representations are there, asymptotically?
\end{question}
Clearly, finding such representations is equivalent to finding $x < N^{1/k}$ with $f(x) = N - x^k$ being square-free.
Hence by the same heuristic arguments as before, we might expect the answer to be $c_f N^{1/k}$, where $c_f$ is defined
precisely the same -- however, note that it now depends on $N$, as does $f$ itself. As such, Question \ref{quest sums}
does not follow immediately from Conjecture \ref{sqfree values conj}, although it might be resolved by similar techniques.
Question \ref{quest sums} has been answered positively for $k=2$ by Estermann in \cite{Estermann}. 
The case $k=3$ was stated by Hooley \cite[\S4.6, Theorem 4]{HooleyBook}\footnote{In
the form that any sufficiently large number
is the sum of a cube and a square-free integer,
with no claim on the asymptotic number of representations.}.
Question \ref{quest sums} appears more difficult and nuanced than 
Conjecture \ref{sqfree values conj}. Indeed, the proof outline Hooley presents for
$k=3$ uses strictly more ideas and methods than his proof of the density
of the square-free values of cubics -- and still cannot obtain the number of representations,
nor is it applicable when $x^3$ is replaced with a general cubic
polynomial. It is thus unsurprising that the case $k \ge 4$ is still open.

\subsection{Square-frees in short intervals}
Another classical problem of interest regards the number of square-free integers
in short intervals, i.e. sets of the form $I(X,H) = \{n \in \Z : X \le n < X+H\}$,
where $H$ is much smaller than $X$. Clearly, the expectation of the
density of square-free integers in such sets, when we average over all $X$,
should be the same as that over all integers, i.e. $\tfrac{1}{\zeta(2)}$. We are
interested in understanding how small we may take $H$, as a function of the size
of $X$, such that the density  will be accurate (up to smaller order deviations) for all $X$,
and not just on average or for almost all $X$. This gives rise to the following classical conjecture:
\begin{conj}\label{short interval conj}
 Let $\epsilon > 0$ be fixed, let $X$ be large, and let $H \gg  X^\epsilon$. Then
 $$\#\{n \in I(X,H) : n \text{ is square-free} \} = (1+o(1))\frac{H}{\zeta(2)}.$$
\end{conj}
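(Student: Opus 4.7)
The plan is to count by Möbius inversion. Writing $\mathbf{1}[n \text{ square-free}] = \sum_{d^2 \mid n} \mu(d)$ and switching the order of summation,
$$S(X,H) := \#\{n \in I(X,H) : n \text{ square-free}\} = \sum_{d \ge 1} \mu(d)\, N_d(X,H),$$
where $N_d(X,H) := \#\{n \in I(X,H) : d^2 \mid n\}$. I would split the sum at the threshold $d = \sqrt{H}$. For $d \le \sqrt{H}$ one has $N_d(X,H) = H/d^2 + O(1)$; completing the Möbius series against $1/\zeta(2)$ yields the expected main term $H/\zeta(2) + O(\sqrt{H})$. The remaining contribution is
$$R(X,H) := \sum_{\sqrt H < d \le \sqrt{X+H}} \mu(d)\, N_d(X,H),$$
and since $d^2 > H$ forces $N_d \in \{0,1\}$, the task reduces to estimating the number of $d$ in that range whose square divides some $n \in I(X,H)$.

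For these $d$, parameterizing $n = d^2 m$ forces $m < (X+H)/H$. For each fixed $m \le X/H$, the admissible $d$ fill a subinterval $[\sqrt{X/m},\sqrt{(X+H)/m}]$ of length $\asymp H/\sqrt{Xm}$, so summing in $m$ gives the trivial estimate $R(X,H) \ll \sqrt{H} + X/H$. This already settles Conjecture \ref{short interval conj} in the classical range $H \gg X^{1/2+\eps}$, which would be the first milestone I would record.

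To go below the $\sqrt{X}$ barrier I would try to implement the Filaseta--Trifonov idea: view the integer pairs $(d,m)$ with $d^2 m \in I(X,H)$ as lattice points clustering near the hyperbola $d^2 m = X$. If two such pairs $(d_1,m_1),(d_2,m_2)$ are very close, the determinant $d_1^2 m_1 - d_2^2 m_2$ is a small nonzero integer, which forces a Diophantine relation among the $d_i$ that can be exploited via the geometry of numbers or via bounds for integer points on the auxiliary curve $d^2 m \equiv c \pmod{d^2}$. This should recover, and perhaps slightly improve upon, the $H \gg X^{1/5}$-type exponents known unconditionally.

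The main obstacle, and the reason the full conjecture has remained open for decades, is that every known lattice-point or sieve argument saturates around $H \asymp X^{1/6}$: the hyperbolas become too flat and the available Wronskian-type savings too weak to beat the trivial count. Reaching the conjectural range $H \gg X^{\eps}$ seems to require genuinely new input -- either a suitable form of the $abc$ conjecture, in the spirit of Granville's conditional resolution of Conjecture \ref{sqfree values conj}, or sharper control of the distribution of squarefull numbers in short intervals than current methods provide. Accordingly, I would treat this last step as the open problem and restrict the unconditional part of the proposal to the partial exponent obtained above.
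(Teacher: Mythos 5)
The statement you are working on is labeled a \emph{conjecture} in the paper, and it is indeed open unconditionally. What the paper actually supplies, in Entin's appendix, is a \emph{conditional} proof assuming the $ABC$ conjecture (Theorem~\ref{main}). Your proposal never arrives at that proof: after the M\"obius-inversion warm-up you correctly reach the unconditional range $H \gg X^{1/2+\epsilon}$, gesture at Filaseta--Trifonov for intermediate exponents, and then explicitly stop, deferring the range $H \gg X^\epsilon$ as ``the open problem'' and only remarking in passing that $ABC$ ``in the spirit of Granville'' might help. So as a proof of the statement the proposal has a genuine gap: the entire ABC-conditional argument, which is the actual content, is absent.

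For comparison, the appendix's argument uses a different decomposition than yours and makes the $ABC$ step completely concrete. Rather than splitting the M\"obius sum at $d = \sqrt H$, it sieves by primes and splits at $p = H$: Proposition~\ref{sievesmall} shows by elementary sieving that the count of $n \in [x,x+H]$ with no square factor $p^2$, $p < H$, is already $\sim H/\zeta(2)$; and Proposition~\ref{sievelarge} shows, assuming $ABC$, that only $o(H)$ integers in the interval are divisible by $p^2$ for some $p > x^\epsilon$. The hypothesis $H > x^\epsilon$ is exactly what makes these two ranges overlap and cover all primes. The heart of Proposition~\ref{sievelarge} is the step you are missing: if a positive proportion of the interval were divisible by large prime squares, a pigeonhole argument produces a fixed $N$-tuple of shifts $a_1,\dots,a_N$ (with $N = \lceil 2/\epsilon\rceil$) and arbitrarily large $y$ such that each $y+a_i$ is divisible by the square of a prime $\gg y^\epsilon$; then $F(X)=(X+a_1)\cdots(X+a_N)$ is a fixed squarefree polynomial whose value $F(y)$ is divisible by $d^2$ with $d \gg y^{N\epsilon} \ge y^2$, contradicting Granville's $ABC$-consequence (Proposition~\ref{propgran}, $s \le y^{1+\alpha}$ with $\alpha<1$). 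You correctly diagnosed that $ABC$ is the right hammer, but the auxiliary squarefree polynomial construction and the pigeonhole reduction to it are the missing ideas that turn the hammer into a proof.

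Two smaller remarks. Your unconditional estimate $R(X,H)\ll \sqrt H + X/H$ for the tail of the M\"obius sum is correct and does settle the range $H \gg X^{1/2+\epsilon}$; that part is fine. However, the paper attributes the best unconditional exponent to Tolev (building on Filaseta--Trifonov), with threshold $H/(x^{1/5}\log x)\to\infty$, rather than the $X^{1/6}$-ish saturation you describe; the discrepancy does not affect the main point, but the record you cite should be adjusted.
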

Again, this conjecture follows from the $ABC$ conjecture by Granville's method -- 
see the Appendix; Granville \cite{Granville} showed that the $ABC$ conjecture
implies that for any fixed $\epsilon > 0$ there exist square-free integers
in $I(X,X^\epsilon)$, for all $X$ sufficiently large.
Unconditionally, the best known result is due to Tolev \cite{Tolev},
who proved the asymptotic for any $H = H(X)$ such that $\tfrac{H}{X^{1/5}\log(X)} \rinf$,
building on results of Filaseta and Trifonov \cite{Filatri}.

More ambitiously, we may ask this question not only for square-free integers, but 
for square-free values of polynomials:
\begin{question}
 Let $f \in \Z[x]$ be a square-free polynomial with $c_f > 0$. How small
 may we take $H = H(X)$ such that the asymptotic
 $$\#\{n \in I(X,H) : f(n) \text{ is square-free}\} \sim c_f H$$
 will hold for all $X$?
\end{question}

\section{Function field analogues}

\subsection{Square-free values of polynomials in function fields}
One may ask similar questions about polynomials over function fields, rather than
over the integers. Fix a prime power $q$, let $\fq$ be the finite field with $q$
elements, and let $A = \fq[t]$ be the ring of polynomials over $\fq$. Let 
$f \in A[x]$ be a square-free polynomial of degree $k$ (in $x$). As before, one
may ask for the density of the square-free values of $f$. As in the integers,
one may heuristically assume that the contributions from different primes in $A$
are independent, and conjecture a density based on that heuristic. 
It turns out that in this setting, one may actually prove that this density is correct:

\begin{thm}
 Let $\PR$ be the set of primes in $A$ (i.e. monic, irreducible polynomials). For any $D \in A$, let
 $\rho(D) := \#\{a \mod D : f(a) \equiv 0 \pmod D\}$, $||D|| := \#\{a \mod D \} = q^{\deg D}$,
 and $c_f := \prod_{P \in \PR} \left(1 - \frac{\rho(P^2)}{||P||^2}\right)$. Then
 $$\#\{a \in \fq[t] : \deg_t(a) < m, f(a) \text{ square-free}\} = c_fq^m + o(q^m)$$
 as $m$ tends to $\infty$.
\end{thm}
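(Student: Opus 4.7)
The plan is to carry out Möbius inclusion--exclusion on $A=\fq[t]$ and control the tail. Writing $S_m=\{a\in A:\deg a<m\}$, one has
\[
\#\{a\in S_m:f(a)\text{ square-free}\}=\sum_{D\text{ monic}}\mu(D)N_m(D),\qquad N_m(D):=\#\{a\in S_m:D^2\mid f(a)\}.
\]
I would fix an auxiliary parameter $B=B(m)\to\infty$ much more slowly than $m$ (say $B=\lfloor\epsilon m\rfloor$ with $\epsilon$ eventually sent to $0$) and split the sum over $D$ into the ranges $\deg D\le B$, $B<\deg D\le m/2$, and $\deg D>m/2$.

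For $\deg D\le m/2$ the set $S_m$ is a disjoint union of $q^{m-2\deg D}$ full residue classes modulo $D^2$, so $N_m(D)=\rho(D^2)q^{m-2\deg D}$ exactly. The function $\rho$ is multiplicative by CRT, and whenever $P\nmid\disc f$ (and $P$ does not divide the leading coefficient of $f$) Hensel lifting gives $\rho(P^2)=\rho(P)\le k$; consequently the Euler product $c_f=\prod_P(1-\rho(P^2)/\|P\|^2)$ converges absolutely. Thus the $\deg D\le B$ contribution equals $c_fq^m+O(q^m\tau_B)$, where $\tau_B:=\sum_{\deg D>B}\rho(D^2)/\|D\|^2\to 0$, while the $B<\deg D\le m/2$ contribution is bounded in absolute value by the same quantity $q^m\tau_B$.

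The real difficulty is the range $\deg D>m/2$. Here the residue-class count breaks down (not every class modulo $D^2$ meets $S_m$), and the two naive estimates $N_m(D)\le\rho(D^2)$ and $N_m(D)\le k\cdot q^{km+L-2\deg D}$ (with $L$ bounding the $t$-degree of the coefficients of $f$) are each individually too weak: even summing the first over squarefree $D$ up to degree $(km+L)/2$ already yields $O(m^{k-1}q^{km/2})$, which swamps $q^m$ as soon as $k\ge 3$. The remaining task, exactly analogous to Granville's ABC-conditional step in the integer case, is to show unconditionally that
\[
\#\bigl\{a\in S_m:\exists\,D\text{ with }\deg D>m/2\text{ and }D^2\mid f(a)\bigr\}=o(q^m).
\]
I expect this to be the crux of the argument and the point at which the function-field setting is genuinely exploited. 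Two natural inputs are (i) the Mason--Stothers theorem, the unconditional function-field analogue of ABC, which would permit an argument in the spirit of Granville's, and (ii) Weil/Lang--Weil-type bounds for the $\fq$-points of the affine variety cut out by $f(x)=D^2 y$ in the coordinates of $x,y,D$, after reducing to a family of curves of controlled genus. Once this large-$D$ estimate is established, assembling the three ranges and letting $B=B(m)\to\infty$ slowly with $m$ produces the asymptotic $c_fq^m+o(q^m)$.
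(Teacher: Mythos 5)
Your framework---M\"obius inclusion--exclusion over monic $D$, exact residue-class counting $N_m(D)=\rho(D^2)q^{m-2\deg D}$ when $\deg D^2<m$, and convergence of the Euler product---agrees with the skeleton of the argument (the paper truncates the sum with Brun's sieve to handle varying $f$, but for a fixed $f$ the full M\"obius sum is fine). One repair is needed in the middle step: in characteristic $p$ a square-free $f\in\fq[t,x]$ can be $x$-inseparable (e.g.\ $f=t+x^p$), in which case $\disc_x f=0$ and Hensel lifting against $\partial f/\partial x$ gives nothing for any $P$. The paper factors $f=f_if_s$ into its $x$-inseparable and $x$-separable parts and uses $R(t)=\rest_x(f_i,\partial f/\partial t)\,\rest_x(f_s,\partial f/\partial x)$ in place of the discriminant, lifting via $\partial f/\partial t$ on $f_i$ and $\partial f/\partial x$ on $f_s$, to recover $\rho(P^2)\le k$ for $P\nmid R$.

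The genuine gap is the range $\deg D>m/2$: you correctly name it as the crux but leave it unproved, and neither suggested input obviously closes it. Mason--Stothers is not the clean unconditional analogue of ABC here, because it carries a separability hypothesis that fails exactly where it matters: when $a\in\fq[t^p]$, or more generally when $\tfrac{\d}{\d t}f(t,a(t))$ vanishes, the radical inequality is unavailable, and such $a$ cannot simply be discarded since they form a positive proportion of $\fqtm$. This characteristic-$p$ obstruction is the central difficulty, not a side issue, and it is the reason a verbatim transplant of Granville's ABC argument does not work. The paper instead uses Poonen's substitution: write $a=y_0^p+ty_1^p+\cdots+t^{p-1}y_{p-1}^p$, set $F(y)=f(a)\in\fq[t][y_0^p,\dots,y_{p-1}^p]$ and $G=\partial F/\partial t$, and observe that because every $y_i$-exponent in $F$ is a multiple of $p$, $G(y)$ is the honest $t$-derivative of $F(y)$ for any $y\in\fq[t]^p$, so $P^2\mid F(y)$ iff $P\mid F(y)$ and $P\mid G(y)$; square-freeness of $f$ then forces $F,G$ coprime, and a resultant-based induction on the number of $y$-variables (Propositions~\ref{B_k 0 bound}, \ref{B_k P bound}, \ref{B_k bound}) bounds the number of $y$ for which $F(y),G(y)$ share a common prime of degree $\ge\ceil{m/2}$. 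That machinery is the substantive content of the proof and is entirely missing from your proposal; the Lang--Weil route you also float would need to turn a count over an affine space of dimension growing with $m$ into a uniformly controlled family of bounded varieties, and no such reduction is apparent.
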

This theorem was first proved by Ramsay \cite{Ramsay}; however, his proof was valid
only for polynomials $f \in \fq[x]$, rather than $\fq[t,x]$, i.e. only polynomials
with constant coefficients. Poonen \cite{Poonen} proved the theorem for all $\fq[t,x]$, 
and generalized it further to multivariate polynomials in $\fq[t,x_1,\dots,x_n]$.
In his 2014 M.Sc. thesis, Lando \cite{Lando} gave a quantitative version of Poonen's
work, and applied it to the problems of square-free and power-free
values at prime polynomials. 

\subsection{New results}\label{new res sec}
Our main goal in this paper is to extend the above results to polynomials $f$
with large coefficients, giving quantitative answers to questions analogous to
those presented in section \ref{intro},
after replacing the integers with the polynomial ring over $\fq$.
Our methods include carefully applying Poonen's and Lando's techniques, as well as
replacing some na\"\i ve sieving arguments with the more sophisticated Brun sieve. 
Specifically, we show:

%
%

\begin{thm} \label{brun thm}
 Let $q=p^e$ be a fixed prime power, let $k > 0$ be a fixed integer, 
 and let $m,n$ be positive integers
\iftoggle{omegas}
 {with $m = \omega(\log_qn \log_q\log_qn)$.}
 {with $m \gg \log_qn \log_q\log_qn$ and $m \rinf$.\footnote{Note
 that if $n$ is bounded, Theorem \ref{brun thm} is equivalent to
 Poonen's theorem. We would therefore be interested mostly
 in the case $n \rinf$, and $m \rinf$ would follow from
 the bound $m \gg \log_qn \log_q\log_qn$. In the course of the proof,
 we provide an explicit upper bound on the constant.}}
 Let $f \in \fq[t,x]$ be a square-free polynomial 
 with $\deg_x f \le k$, $\deg_t f \le n$. 
 Let $c_f$ be defined as before.
  Then
 \begin{align*}
  \#\{a \in \fq[t] : \deg a < m, f(a) \text{ square-free}\} = 
  c_fq^m (1+o(1)).
 \end{align*}
 
\end{thm}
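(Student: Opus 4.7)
Our strategy is to adapt Poonen's sieve for square-free values, replacing the exact inclusion-exclusion on small primes by a Brun truncation so as to accommodate the unbounded coefficient degree $n$. We choose a small-prime cutoff $y\sim\log_q n+\log_q\log_q n$, a large-prime cutoff $M=\lceil(n+km)/2\rceil$, and a Brun depth $r$ with $ry\le m/2$ and $r\rinf$. We partition the primes $P$ of $\fq[t]$ by $\deg P$ into small ($\deg P\le y$), medium ($y<\deg P\le M$) and large ($\deg P>M$). Set $N(D):=\#\{a\in\fqtm:D^2\mid f(a)\}$, $S(m):=\#\{a\in\fqtm:f(a)\text{ squarefree}\}$, and $A(m):=\#\{a\in\fqtm:P^2\nmid f(a)\text{ for every small }P\}$. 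We then obtain the sandwich
\[
A(m)-\!\!\!\!\sum_{y<\deg P\le M}\!\!\!\!N(P)\;-\;k\;\le\;S(m)\;\le\;A(m),
\]
the additive $k$ absorbing those $a$ for which $f(a)=0$, which is the only way a prime with $\deg P>M$ can satisfy $P^2\mid f(a)$ (recalling $\deg_x f\le k$).

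\textbf{Main term.} Upper and lower Brun truncations give $S_{r}^{-}(m)\le A(m)\le S_{r}^{+}(m)$, where
\[
S_r^\pm(m)=\!\!\sum_{\substack{D\text{ squarefree}\\ P\mid D\Rightarrow \deg P\le y\\ \#\{P\mid D\}\le r_\pm}}\!\!\mu(D)\,N(D),
\]
with $r_+,r_-$ of opposite parity near $r$. Since $ry\le m/2$, every $D$ appearing has $\deg D^2\le m$, and by CRT $N(D)=\rho(D^2)q^{m-2\deg D}$ exactly. Isolating the good primes ($P\nmid\disc_x f$, for which Hensel yields $\rho(P^2)\le k$) from the at most $O_k(n)$ bad primes, the standard Bonferroni tail bound produces
\[
\Big|S_r^\pm(m)-q^m\!\!\prod_{\deg P\le y}\!\!\Big(1-\tfrac{\rho(P^2)}{\|P\|^2}\Big)\Big|\;\ll\;q^m\,\frac{C(k)^{r+1}}{(r+1)!}+\text{(bad-prime correction)}=o(q^m).
\]
The truncated product in turn equals $c_f(1+o(1))$: its tail $\sum_{\deg P>y}\rho(P^2)/\|P\|^2$ is $O(q^{-y})$ on good primes and $O(k^2 n/q^y)$ on the $O_k(n)$ bad primes, both $o(1)$ by our choice of $y$.

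\textbf{Medium primes --- the main obstacle.} The delicate step is bounding $\sum_{y<\deg P\le M}N(P)$. For $\deg P\le m/2$, the CRT formula gives a geometric error $\ll q^m/q^y=o(q^m)$. For $m/2<\deg P\le M=\tfrac12(n+km)$, however, the naive estimate $N(P)\le\rho(P^2)$ summed over primes of degree up to $M$ is of order $q^M\gg q^m$, utterly ruinous. Here we must exploit that $P^2\mid f(a)$ forces $P\mid\gcd\!\big(f(a),\,\tfrac{d}{dt}f(a)\big)$, confining admissible pairs $(a,P)$ to a subvariety in $(t,x)$-space of degree $O_k(n+m)$ whose $\fq$-points are counted by Poonen's algebro-geometric argument with Lando's explicit quantitative refinement. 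Optimizing the Brun depth $r$ and cutoff $y$ under the coupled constraints $ry\le m/2$, $r\rinf$, and $q^y\gg n\log n$, the tightest pressure comes precisely from the medium-prime bound, forcing the hypothesis $m\gg\log_q n\,\log_q\log_q n$; once all errors are $o(q^m)$, the sandwich collapses to $S(m)=c_f q^m(1+o(1))$.
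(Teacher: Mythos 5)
Your high-level architecture matches the paper: cut primes into small/medium/large ranges, run a Brun truncation on the small ones, a union bound in the range where exact CRT still applies, and Poonen/Lando's multivariate resultant counting for the rest. However, there are two genuine gaps in the details, the first of which is exactly where the characteristic hypothesis of the theorem comes from.

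\textbf{The Brun tail bound is wrong, and this is where the $\log_q\log_q n$ factor lives.} You write the truncation error as $q^m C(k)^{r+1}/(r+1)!$, which tacitly asserts that the partial singular sum $v_1 = \sum_{\deg P\le y}\rho(P^2)/\|P\|^2$ is bounded by a constant depending only on $k$. That is false once coefficients grow. The ``bad'' primes --- those dividing $\rest_x(f_s,\partial_x f)\rest_x(f_i,\partial_t f)$, a polynomial in $t$ of degree $O_k(n)$ --- can be \emph{all} primes of degree up to roughly $\log_q n$, each contributing $\rho(P^2)/\|P\|^2$ as large as $k/\|P\|$, and their total contribution to $v_1$ is then of size $k\ln\log_q n$. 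Thus the correct Bonferroni tail is $\lambda^{r+1}/(r+1)!$ with $\lambda \asymp \ln\log_q n$ unbounded, and killing it against $c_f \gg (\log_q n)^{-2k}$ forces $r\gg\log_q\log_q n$, not merely $r\rinf$. Combined with $ry\le m/2$ and $y\asymp\log_q n$, \emph{this} --- not the medium-prime estimate, as you claim in your last paragraph --- is what forces $m\gg\log_q n\,\log_q\log_q n$. Under your constraints as written ($r\rinf$, $ry\le m/2$, $q^y\gg n\log n$) you would only obtain $m/\log_q n\rinf$, a strictly weaker hypothesis than the theorem states, so the hypothesis cannot be ``forced'' by your analysis.

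\textbf{Errors must be $o(c_f q^m)$, not $o(q^m)$.} Several times you assert error terms are $o(q^m)$; but $c_f$ is not bounded away from $0$ as $n\rinf$ (it can be as small as $(\log_q n)^{-2k+o(1)}$), so $o(q^m)$ does not yield a $(1+o(1))$ relative error. This matters concretely: the medium-prime union bound is $q^m S(y)$ with $S(y)=O\bigl(n/(y q^y)\bigr)$, and to make this $o(c_f q^m)$ you need $y q^y\gg n(\log_q n)^{2k}$, i.e.\ $y\approx\log_q n+2k\log_q\log_q n$, not the $y\approx\log_q n+\log_q\log_q n$ you propose.

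A smaller but real imprecision: you say ``$P^2\mid f(a)$ forces $P\mid\gcd(f(a),\tfrac{d}{dt}f(a))$, confining admissible pairs $(a,P)$ to a subvariety in $(t,x)$-space.'' The full derivative $\tfrac{d}{dt}f(t,a(t))=\partial_t f(t,a)+\partial_x f(t,a)\cdot a'(t)$ involves $a'$, which is $\mathbb{F}_q$-linear but not polynomial in $a$, so the constraint is \emph{not} a subvariety in $(t,x)$. This is precisely why Poonen substitutes $a=y_0^p+ty_1^p+\cdots+t^{p-1}y_{p-1}^p$: in those coordinates $\tfrac{d}{dt}F(y)$ equals the honest partial derivative $\partial_t F(y)$, and the pair of conditions $P\mid F(y)$, $P\mid\partial_t F(y)$ really does define a variety in $(t,y_0,\dots,y_{p-1})$. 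If you invoke Poonen--Lando you should do so through this substitution (which is also what introduces the exponent $m/p$ into the large-prime error and hence the characteristic $p$ into Theorem~\ref{short interval thm}).

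One thing you do differently that is correct and slightly cleaner cosmetically: you put the upper cutoff at $M=\lceil(n+km)/2\rceil$ so that primes beyond $M$ can divide $f(a)$ squared only when $f(a)=0$ (contributing at most $k$). The paper instead sets the boundary at $m_1=\lceil m/2\rceil$ and absorbs everything above it into the Poonen--Lando count. These are the same analysis; the paper's choice of $m_1=\lceil m/2\rceil\ge m_p$ is what makes Proposition~\ref{B_k P bound} work, and your split at $M$ would still require that same intermediate cutoff inside the Lando argument, so nothing is saved.
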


From which we may immediately derive an analogue of Question \ref{quest sums}:

\begin{cor}
 Let $q=p^e$ be a fixed prime power, let $k > 0$ be a fixed integer, and let 
 $N \in \fq[t]$ be of sufficiently large degree $n$.
 Additionally, suppose that either $k$ is co-prime to $p$, or $N$ is not a $p$-th power.
 Then $N$ has 
 $c_{N,k} q^{\ceil{n/k}}(1+o(1))$ representations as $N = x^k + r$, with $x,r \in \fq[t]$ 
 such that $r$ is square-free and $\deg x < \tfrac{n}{k}$, where 
 $c_{N,k} = \prod_{P \in \PR} \left(1 - \frac{\rho_{N,k}(P^2)}{P^2}\right)$ and
 $\rho_{N,k}(D) = \#\{a \mod D : a^k \equiv N \pmod D\}.$
\end{cor}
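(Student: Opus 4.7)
The plan is to apply Theorem \ref{brun thm} to the polynomial $f(t,x) := N(t) - x^k \in \fq[t,x]$ with sieving parameter $m = \ceil{n/k}$. We have $\deg_x f = k$, $\deg_t f = n$, and $m = \ceil{n/k}$ grows linearly in $n$, which comfortably dominates $\log_q n \log_q\log_q n$ for $n$ large. The set of admissible $a$'s has cardinality $q^{\ceil{n/k}}$, matching the main term up to the factor $c_{N,k}$.

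The only nontrivial hypothesis to verify is that $f$ is square-free in $\fq[t,x]$. Writing $F := \fq(t)$, note that $f$ has unit leading $x$-coefficient and so is primitive in $\fq[t][x]$; by Gauss's lemma it suffices to show square-freeness in $F[x]$. If $\gcd(k,p)=1$, then $\partial_x f = -kx^{k-1}$ is nonzero with unique root $x=0$, at which $f(t,0) = N(t) \neq 0$; hence $\gcd(f,\partial_x f)=1$ in $F[x]$, forcing $f$ to have $k$ distinct roots in $\overline{F}$, so $f$ is square-free. If instead $p \mid k$ and $N$ is not a $p$-th power, write $k = p^j k_0$ with $\gcd(k_0,p)=1$, $j \geq 1$. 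A short induction (base case: $y^p - a$ is irreducible over a field iff $a$ is not a $p$-th power) shows that $y^{p^j} - N$ is irreducible over $F$, so $\beta := N^{1/p^j}$ generates a purely inseparable extension $F(\beta)/F$ of degree $p^j$. Each root $\alpha$ of $f$ in $\overline{F}$ satisfies $\alpha^{k_0} = \beta$, and since $y^{k_0} - \beta$ is separable, $\alpha$ is separable over $F(\beta)$; hence the inseparable degree of $\alpha$ over $F$ is exactly $p^j$. Now $y^{k_0} - N$ is separable over $F$ (as $\gcd(k_0,p)=1$), so it factors in $F[y]$ into distinct irreducibles $y^{k_0} - N = \prod_o Q_o(y)$. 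Substituting $y = x^{p^j}$ gives $f(x) = \prod_o Q_o(x^{p^j})$ in $F[x]$. Each factor $Q_o(x^{p^j})$ has degree $(\deg Q_o)\cdot p^j$, which equals $[F(\alpha):F]$ for any $\alpha$ in the corresponding orbit, so $Q_o(x^{p^j})$ is the minimal polynomial of $\alpha$ and hence irreducible; and the factors are distinct because the $Q_o$ are. Thus $f$ is a product of distinct irreducibles in $F[x]$, i.e.\ square-free. This verification is the main technical subtlety of the proof.

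Identifying the local densities is immediate: for any $D \in \fq[t]$,
$\rho_f(D) = \#\{a \bmod D : N - a^k \equiv 0 \pmod D\} = \rho_{N,k}(D)$,
so $c_f = c_{N,k}$. Theorem \ref{brun thm} therefore yields
\[\#\{a \in \fq[t] : \deg a < \ceil{n/k},\ N - a^k \text{ is square-free}\} = c_{N,k}\, q^{\ceil{n/k}}(1 + o(1)).\]
Setting $x = a$ and $r = N - a^k$ puts each counted $a$ in bijection with a representation $N = x^k + r$ having $r$ square-free and $\deg x < \ceil{n/k}$, which for integer $\deg x$ is equivalent to $\deg x < n/k$. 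This gives the asymptotic asserted in the corollary.
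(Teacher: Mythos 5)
Your application of Theorem \ref{brun thm} is exactly the paper's argument, and your reduction of the corollary to the square-freeness of $f = N - x^k$ and the identity $\rho_f = \rho_{N,k}$ is correct. The difference is entirely in how square-freeness is verified, and there you take a substantially longer route. The paper's footnote disposes of both cases in two lines by looking at the partial derivatives in $\fq[t,x]$: if $p\nmid k$ then $\partial_x f = -kx^{k-1}\neq 0$ and $\gcd(f,\partial_x f)$ is trivially a unit since $x\nmid f$; if $N$ is not a $p$-th power then $\partial_t f = N'(t)\neq 0$ lies in $\fq[t]$, so any repeated irreducible factor of $f$ would have to lie in $\fq[t]$ and divide the (unit) content of $f$ as a polynomial in $x$ — impossible. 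Your first case ($\gcd(k,p)=1$) is essentially this same derivative argument carried out over $F=\fq(t)$ with Gauss's lemma. Your second case, however, replaces the one-line $\partial_t f$ observation with a full structure analysis: irreducibility of $y^{p^j}-N$, the purely inseparable extension $F(\beta)/F$, factoring $y^{k_0}-N$ into separable irreducibles, and deducing that each $Q_o(x^{p^j})$ is irreducible. This is correct and instructive, but heavier machinery than the problem demands.

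One small gap in your inseparable case: you assert that $[F(\alpha):F] = (\deg Q_o)\,p^j$, citing only that the inseparable degree of $F(\alpha)/F$ is $p^j$. That pins down the inseparable degree, not the total degree — you still need to show the separable degree equals $\deg Q_o$. The quickest repair: $F(\alpha)$ contains both $F(\gamma)$ (separable over $F$ of degree $\deg Q_o$, where $\gamma=\alpha^{p^j}$) and $F(\beta)$ (purely inseparable of degree $p^j$); a separable and a purely inseparable extension of $F$ are linearly disjoint, so $[F(\alpha):F]\ge (\deg Q_o)\,p^j$, and the reverse inequality holds since $\alpha$ is a root of $Q_o(x^{p^j})$. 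With that line inserted, your argument is complete. Given the effort, though, the paper's derivative criterion is the argument you want to internalize here.
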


Indeed, this is exactly the number of square-free
values of $f(x) = N - x^k$,\linebreak which is square-free\footnote{Note that if $N$ is a $p$-th
power and $p \mid k$, then $f(x)$ is a $p$-th power as well, hence not square-free.
It is easy to see that in all other cases, $f$ is square-free, by considering
its derivatives by $x$ and $t$, which are co-prime to $f$ whenever they are non-zero.}
and has $\deg_x f = k$, $\deg_t f = n$,
where $x$ ranges over
polynomials of degree less than $m = \ceil{\tfrac{n}{k}}$, which clearly satisfies the assumptions
of Theorem \ref{brun thm} as $n \rinf$. 

If we were to apply Theorem \ref{brun thm} to a short interval setting,
with an interval of length $H = q^m$
consisting of polynomial of size $X = q^n$, it would state that we have the correct asymptotic
\iftoggle{omegas}
{under $m = \omega(\log_qn \log_q\log_qn)$, which is equivalent to
$H = (\log_q X)^{\omega(\log_q\log_q\log_q X)}$.}
{when $m \gg \log_qn \log_q\log_qn$,
or equivalently, for $H \ge (\log_q X)^{C \log_q\log_q\log_q X}$ for
a certain constant $C$ and all sufficiently large $X$.} 
This is already much weaker than the condition $H \gg X^\epsilon$, 
but in fact we can go even lower:

\begin{thm} \label{short interval thm}
 Let $q = p^e$ be a fixed prime power, and $g \in \fq[t,x]$ a fixed square-free
 polynomial with $\deg_x g = k$. Let $n,m$ be large positive integers with 
 \iftoggle{omegas}
 {$m = p(\log_q n - \log_q \log_q n) + \omega(1)$}
 {$m - p(\log_q n - \log_q \log_q n) \rinf$}, and let
 $N(t) \in \fq[t]$ be of degree $n$. Consider the interval of size $H = q^m$ around $N$,
 $$I(N,m) = \{N + a : a \in \fq[t], \deg a < m\}.$$
 
 Then 
 \begin{align*}
  \#\{a \in I(N,m) : g(a) \text{ square-free}\} = 
  c_gq^m (1+o(1)). 
 \end{align*}
  
\end{thm}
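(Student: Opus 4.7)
The plan is to reduce the theorem to a refined short-interval variant of the Brun sieve already developed for Theorem \ref{brun thm}. First I would make the translation $a = N + b$ with $\deg b < m$, reducing the problem to counting $b$ for which $f(b)$ is square-free, where $f(x) := g(N+x) \in \fq[t,x]$. This $f$ has $\deg_x f = k$ and $\deg_t f = O(n)$, and the local densities are preserved: $\rho_f(P^2) = \rho_g(P^2)$ for every prime $P$ (since a substitution $x \mapsto N + x$ is a bijection on residues modulo $P^2$), so in particular $c_f = c_g$. A direct application of Theorem \ref{brun thm} to this $f$ is insufficient, because it would require $m \gg \log_q n \log_q \log_q n$; the saving must come from exploiting the short-interval structure of $b$ rather than treating $\{b : \deg b < m\}$ as a generic interval.

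I would then split the Brun-sieve sum according to the degree of the prime $P$ with $P^2 \mid f(b)$. For $\deg P$ up to roughly $\tfrac{m}{2} - o(m)$, the interval $\{b : \deg b < m\}$ equidistributes modulo $P^2$, and the small-prime analysis behind Theorem \ref{brun thm}, applied verbatim with the local densities $\rho_g(P^2)$, delivers the main term $c_g q^m(1+o(1))$. For $\deg P$ slightly larger, the truncated Brun upper-bound sieve weights still give the required control, with the error governed by the usual tail of the Euler product for $c_g$. The novel and delicate range is $\tfrac{m}{2} \lesssim \deg P \lesssim \tfrac{1}{2}\deg_t f(b) = O(n)$, where the naive estimate $\rho_f(P^2)\max(1, q^{m-2\deg P})$ is much too lossy.

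To handle this large-prime range I would use a derivative-in-$t$ argument: if $P^2 \mid f(b)$, then $P$ divides $\partt[f(b)] = f_t(b) + f_x(b)\,\partt b$. When $\partt b \ne 0$, this is a genuine additional algebraic constraint that, together with a Lando/Poonen-style quantitative bound for the number of solutions to a pair of congruences involving $b$ and $\partt b$, substantially reduces the count per prime $P$. When $\partt b = 0$, the polynomial $b$ lies in $\fq[t^p]$ and there are at most $q^{\lceil m/p \rceil}$ such $b$ in the interval, so the trivial bound on this subset is already acceptable \emph{provided} $q^{m/p}$ is small enough compared to the number of large primes being summed over.

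The main obstacle will be balancing precisely this $\fq[t^p]$ contribution against the polynomial-in-$n$ losses in the large-prime range. The factor $q^{m/p}$ coming from $p$-th-power-type $b$'s, multiplied by a $\text{poly}(n)$ upper bound on the number of relevant primes $P$ (via $\rho_f(P^2)$ and a discriminant estimate), needs to be $o(q^m)$; equivalently, $q^{m(1-1/p)} \gg \text{poly}(n)$, which after matching constants translates exactly into the hypothesis $m - p(\log_q n - \log_q \log_q n) \rinf$. Making the derivative argument uniform throughout the large-prime range, and keeping the $\text{poly}(n)$ factors sharp enough to hit this threshold, is the technically hardest step; everything else in the proof is a careful bookkeeping of the sieve from Theorem \ref{brun thm} adapted to the interval $I(N,m)$.
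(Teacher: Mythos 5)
Your setup matches the paper's exactly: translate to $f(x) := g(N+x)$, observe that this is an $\fq[t]$-shift of the $x$-variable so $\rho_f(D) = \rho_g(D)$ for every modulus $D$, hence $c_f = c_g$ and $S_f = S_g$, and split the sieve by the degree of $P$. For the small and medium primes you gesture at the right reason things improve (``tail of the Euler product for $c_g$''), but the key point should be stated sharply: in Theorem \ref{brun thm} the constraint $m \gg \log_q n \log_q\log_q n$ came from needing $m_0 \gg \log_q n$ and Brun truncation level $r \gg \log_q\log_q n$ to beat $c_f \gg (\log_q n)^{-2k}$. Here $c_g$ and $S_g$ are \emph{constants}, so the sections \ref{N' sect} and \ref{N'' sect} arguments go through verbatim as soon as $m_0, r \rinf$ at any rate with $2m_0 r \le m$, and the constraint coming from this range drops to just $m\rinf$. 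There is no new ``short-interval equidistribution'' input here — the interval $\fq[t]^{<m}$ always equidistributes modulo $P^2$ once $2\deg P < m$, for any $m$.

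The genuine gap is in the large-prime range. You identify the set of $b$ with $\tfrac{\d b}{\d t} = 0$ (size $q^{\ceil{m/p}}$) as the bottleneck, multiply by a poly$(n)$ prime count, and claim $q^{m/p}\cdot \mathrm{poly}(n) = o(q^m)$, i.e. $q^{m(1-1/p)} \gg \mathrm{poly}(n)$, ``translates exactly'' into the hypothesis. This fails twice. First, $q^{\ceil{m/p}} = o(q^m)$ unconditionally for $p \ge 2$, so this subset is never the bottleneck and no $\mathrm{poly}(n)$ factor should appear: the trivial bound already suffices. Second, $q^{m(1-1/p)} \gg n^C$ gives $m \gtrsim \tfrac{Cp}{p-1}\log_q n$, which for every $p \ge 3$ is \emph{weaker} than the theorem's $m \gtrsim p\log_q n$ when $C$ is the natural discriminant degree $O(1)$ — so the heuristic is claiming more than the theorem proves, a sign it cannot be right. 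The actual obstruction is the $\tfrac{\d b}{\d t} \ne 0$ case, where $P \mid f_t(b) + f_x(b)\,\tfrac{\d b}{\d t}$ is not a congruence condition on $b \bmod P$ at all, because $\tfrac{\d b}{\d t}$ is not a polynomial function of the residue class. The paper circumvents this with Poonen's substitution: parametrize $a = y_0^p + ty_1^p + \cdots + t^{p-1}y_{p-1}^p$ with each $\deg y_i < m_p := \ceil{m/p}$; then $\tfrac{\d}{\d t}F(y)$ is a genuine polynomial $G$ in the $y_i$ (since $\tfrac{\d}{\d t}(y_i^p)=0$ in characteristic $p$), so $P^2 \mid F(y)$ becomes the pair of polynomial conditions $P \mid F(y), P \mid G(y)$, and Proposition \ref{B_k bound} applied to the coprime pair $(F,G)$ gives $\#N''' = O\big(\tfrac{n}{m}\,q^{(p-1)m_p}\big) = O\big(\tfrac{n}{m q^{m/p}}\,q^m\big)$. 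Since $c_f$ is a constant, this is $o(c_f q^m)$ iff $m q^{m/p} \gg n$, which is exactly $m - p(\log_q n - \log_q\log_q n) \rinf$.
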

In terms of $H$ and $X$, the relation 
\iftoggle{omegas}
{$m = p(\log_q n - \log_q \log_q n) + \omega(1)$} 
{$m - p(\log_q n - \log_q \log_q n) \rinf$} translates to
\iftoggle{omegas}
{$H = \omega\left(\!\left(\frac{\log_q X}{\log_q \log_q X}\right)^p \right)$,}
{
$H \ge C\left(\frac{\log_q X}{\log_q \log_q X}\right)^p$ for any constant $C > 0$ 
and all sufficiently large $X$,}
i.e. a polylogarithmic relation.
It seems quite peculiar that the characteristic of the field should play such an important role
in this relation. We remark further that one may find intervals
with $H \gg \frac{\log_q X}{\log_q \log_q X}$ that contain no 
square-free polynomials at all, by a straight-forward application
of the Chinese Remainder Theorem; so this result is nearly sharp.

The proofs of the two theorems are very similar -- they both involve essentially the same
computations, but the different settings lead to different error terms being dominant,
hence different lower bounds on $m$. In fact the two contributions are mostly disjoint, 
which allows us to generalize the two results into one unified theorem:

\begin{thm} \label{unif thm}
  Let $q=p^e$ be a fixed prime power, $k > 0$ a fixed integer, and $m,n_1,n_2$ be varying
  positive integers with both 
  \iftoggle{omegas}
  {$m = \omega(\log_qn_1 \log_q\log_qn_1)$ and
  $m = p(\log_q n_2 - \log_q \log_q n_2 + 2k\log_q\log_qn_1) + \omega(1)$.}
  {$m \gg \log_qn_1 \log_q\log_qn_1$ and
  $m - p(\log_q n_2 - \log_q \log_q n_2 + 2k\log_q\log_qn_1) \rinf$.}
  Let $g \in \fq[t,x]$ be a square-free polynomial 
  with $\deg_x g \le k$, $\deg_t g \le n_1$.
  Let $N(t) \in \fq[t]$ be of degree $n_2$, and let $I(N,m)$ be the interval of size $q^m$ around $N$.
  Then 
 \begin{align*}
  \#\{a \in I(N,m) : g(a) \text{ square-free}\} = 
  c_g q^m (1+o(1)). 
 \end{align*}
\end{thm}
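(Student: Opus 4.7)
The plan is to run in parallel the M\"obius--Brun sieve used for Theorem~\ref{brun thm} and the derivative/resultant argument used for Theorem~\ref{short interval thm}, tracking both error contributions in a single computation. Writing
\[
S := \#\{a \in I(N,m) : g(a) \text{ square-free}\}
   = \sum_{a \in I(N,m)} \sum_{D^2 \mid g(a)} \mu(D),
\]
I would split the inner sum using two thresholds $B \le M$, to be chosen depending on $n_1,n_2,m$, into three regimes: (i) all prime factors of $D$ have degree $\le B$; (ii) some prime factor of $D$ lies in $B < \deg P \le M$; (iii) some prime factor of $D$ satisfies $\deg P > M$. The first regime produces the main term, the second a benign equidistribution error, and the third is the genuine obstacle.

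In regime (i), CRT together with the fact that $I(N,m)$ is a union of complete residue classes modulo $D^2$ whenever $q^m \ge ||D||^2$ gives
\[
\#\{a \in I(N,m) : D^2 \mid g(a)\} = \frac{\rho(D^2)}{||D||^2}\, q^m + O(\rho(D^2)),
\]
after which Poonen's local-density computation combined with Lando's quantitative Brun-sieve truncation on primes of degree $\le B$ yields $c_g q^m(1+o(1))$. The dependence of the truncation error on $n_1$ enters through the exceptional primes $P$ dividing the $x$-discriminant of $g$, whose degrees are controlled linearly by $n_1$; the first hypothesis $m \gg \log_q n_1 \log_q\log_q n_1$ is exactly what makes this error $o(q^m)$. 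Regime (ii) is handled by the same pointwise estimate summed trivially, using $\rho(P^2) = O_k(1)$ away from the exceptional set; this too is absorbed by the first hypothesis once $B$ is chosen just above $\deg$(disc).

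The main obstacle is regime (iii), where the naive bound $\rho(P^2) q^m/||P||^2$ is no longer summable. Following the strategy used in the proof of Theorem~\ref{short interval thm}, I would observe that if $a = N + b \in I(N,m)$ satisfies $P^2 \mid g(a)$ then $P$ divides both $g(a)$ and its $t$-derivative
\[
\tfrac{d}{dt}\,g(a) = (\partial_t g)(a) + a' \cdot (\partial_x g)(a).
\]
Because $a' = N' + b'$ has $t$-degree essentially $n_2$ rather than $m$, it is $n_2$ (not $m$) that drives the second hypothesis. Taking a resultant in $x$ of these two congruences produces an auxiliary polynomial in $t$ whose degree is $O(k(n_1+n_2))$; bounding the resulting $(a,P)$ pairs via this resultant and the fact that only $O(\deg_t g / \deg P)$ large primes can divide any fixed value $g(a)$ yields a contribution that is $o(q^m)$ exactly when $m - p(\log_q n_2 - \log_q\log_q n_2 + 2k \log_q\log_q n_1) \rinf$. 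The factor of $p$ arises because in characteristic $p$ one must guarantee that this resultant is not itself a $p$-th power --- otherwise its irreducible factors have degree $p$ times smaller than naively expected, inflating the count correspondingly. The hard part will be verifying this non-$p$-th-power property uniformly in $g$ and $N$, and then choosing $B,M$ so that both error contributions are simultaneously $o(q^m)$. Since the two contributions depend on largely disjoint parameter regimes (small primes of bounded degree vs.\ large primes of degree $\gg \log_q n_2$), the two conditions on $m$ can be decoupled, which is exactly what permits unifying Theorems~\ref{brun thm} and~\ref{short interval thm}.
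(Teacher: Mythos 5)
Your high-level plan has the right skeleton — a three-regime sieve with the crucial observation that the singular sum and small/medium-prime error terms are governed by $n_1$ while the large-prime error is governed by $n_2$, so the two hypotheses on $m$ decouple. You also implicitly use the translation $f(x) = g(N(t)+x)$. This is the right organizing idea, and it is the one the paper uses; the paper's actual proof is much shorter than yours because it just notes that $\rho_f(D) = \rho_g(D)$ (local densities are translation invariant), so $S$, $S(m_0)$, and $c_f$ depend only on $g$ (hence $n_1$), and plugs $\deg_t f \le k n_2 + n_1$ directly into the already-proved estimate \eqref{N''' est}.

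However, your treatment of regime (iii) is not the paper's argument and as stated it would not go through. You propose to use the literal total $t$-derivative $\tfrac{\d}{\d t} g(a) = (\partial_t g)(a) + a'\,(\partial_x g)(a)$ and then ``take a resultant in $x$.'' But this expression is not a polynomial function of $a$: it involves $a'$, which depends on $a$ and does not fit into a two-variable resultant in $x$. This is precisely the obstacle that forces the paper to invoke Poonen's substitution $a = y_0^p + t y_1^p + \cdots + t^{p-1}y_{p-1}^p$: because $(y_i^p)' = 0$ in characteristic $p$, the total $t$-derivative of $F(y) := f(y_0^p + \cdots + t^{p-1}y_{p-1}^p)$ coincides with the \emph{partial} derivative $G := \partial_t F$, which \emph{is} a genuine polynomial in $(t, y_0, \ldots, y_{p-1})$, and Poonen's Lemmas 7.2--7.3 guarantee $F$ and $G$ are coprime. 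Only then can one run the resultant/induction machinery of Proposition \ref{B_k bound}. Without this device, the direct derivative can also vanish identically on structured families of $a$ (e.g.\ $a \in \fq[t^p]$), which your argument does not address.

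Relatedly, your explanation for the factor $p$ in the second hypothesis is wrong. It has nothing to do with preventing an auxiliary resultant from being a $p$-th power. The $p$ arises from the parametrization itself: $a$ of degree $< m$ is encoded by a $p$-tuple $(y_0,\ldots,y_{p-1})$ each of degree $< m/p$, and the count in Proposition \ref{B_k bound} (in $l = p-1$ variables) gives $O\bigl(\tfrac{n+m}{m} q^{(p-1)m/p}\bigr)$, i.e.\ a saving of only $q^{m/p}$ (not $q^{m}$) over the trivial bound. That $q^{m/p}$ is exactly what produces $m - p(\log_q n_2 - \log_q\log_q n_2 + 2k\log_q\log_q n_1) \rinf$ after comparing with $c_f \gg (\log_q n_1)^{-2k}$. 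The ``hard part'' you flag (a uniform non-$p$-th-power property) is a symptom that your version of regime (iii) is not the right argument, rather than a genuine difficulty of the correct one.
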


\section{Proof of main theorem}
We will begin by working in the setting of Theorem \ref{brun thm}, for simplicity, but most
of the computations will be immediately applicable to the other theorems as well.
For brevity, let us denote for any set of polynomials $A$ and any degree $d$,
$A^{<d} = \{a \in A : \deg a < d\}$,
and similarly define $A^{\ge d},A^{=d}$.

Let us write $N = \{a \in \fqtm : f(a) \text{ square-free}\}$. The first step towards estimating $\#N$ is
to bound it from below and above by terms more closely related to the contributions of certain primes. We define
\begin{align}
   \label{N' def} N' &= \{a \in \fqtm : \forall P \in \PR^{<m_0}, P^2 \nmid f(a)\} \\
   \label{N'' def} N'' &= \{a \in \fqtm : \exists P \in \PR^{\ge m_0} \cap \PR^{< m_1}, P^2 \mid f(a)\} \\
   \label{N''' def} N''' &= \{a \in \fqtm : \exists P \in \PR^{\ge m_1}, P^2 \mid f(a)\}
\end{align}
where $m_0$ and $m_1$ are appropriately chosen thresholds. Specifically, we take $m_1 = \ceil{m/2}$,
and $m_0$ will be chosen later.

Clearly $N \subseteq N' \subseteq N \cup N'' \cup N'''$, hence $\#N' - \#N'' - \#N''' \le \#N \le \#N'$.
We would therefore like to show that $\#N' = c_fq^m(1+o(1))$ and $\#N'',\#N''' = o(c_f q^m)$. 
Before we proceed to prove these estimates, we need to establish bounds 
for certain sums and products related to $f$.

\subsection{Bounds on the singular sum} \label{singular sect}

We define the {\it singular sum} of the polynomial $f$ as
$S = \sum_{P \in \PR} \frac{\rho(P^2)}{||P||^2}$. We also denote 
the tail of this series by $S(m_0) = \sum_{P \in \PR^{\ge m_0}} \frac{\rho(P^2)}{||P||^2}$.
Our goal in this section is to prove the following bounds on $S, S(m_0)$ and $c_f$:
\begin{lem}
 Let $q$ be a fixed prime power, $k > 0$ a fixed integer, and $n,m_0$ varying integers 
 with $n \rinf$. Let $f \in \fq[t,x]$ be a square-free polynomial 
 with $\deg_x f \le k$ and $\deg_t f \le n$.
 Define $\rho(D), S, S(m_0), c_f$ as above. We have the following asymptotic inequalities:
 \begin{align}
 \label{S bound} S &\le k \ln \log_q n + O(1)  = O(\ln \ln n),  \\
 \label{Sm0 bound} S(m_0) &= O\left(\frac{n}{m_0 q^{m_0}}\right), \\
 \label{c_f bound}\iftoggle{omegas}
 {c_f &= \omega((\log_qn)^{-2k}).}
 {c_f &\gg (\log_qn)^{-2k}.}
\end{align}
\end{lem}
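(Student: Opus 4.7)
The plan is to decompose $\rho(P^2)$ via Hensel's lemma, expose the role of the $x$-discriminant of $f$, and then estimate the resulting sums with elementary prime counting in $\fq[t]$.

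First I would analyze $\rho(P^2)$ locally. Any solution $a \pmod{P^2}$ reduces to a root $\alpha$ of $f$ mod $P$; writing $a = \alpha + P\beta$ and expanding $f(a)$ to first order in $P$, each simple root $\alpha$ (one with $f'(\alpha) \not\equiv 0 \pmod P$) lifts via the usual Hensel step to exactly one solution mod $P^2$, while each multiple root lifts to either $0$ or $||P||$ solutions according to whether $f(\alpha) \equiv 0 \pmod{P^2}$. Writing $r_P$ for the number of simple roots and $s_P$ for the number of ``lifting'' multiple roots, this gives $\rho(P^2) = r_P + s_P||P||$ with $r_P + s_P \le k$, hence $\rho(P^2)/||P||^2 \le 2k/||P||$ for all $P$. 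The key observation is that $s_P > 0$ forces $P$ to divide the discriminant $\Delta := \disc_x(f) \in \fq[t]$, which is nonzero (since $f$ is square-free over $\fq(t)$) and has $\deg_t \Delta = O(n)$.

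Plugging this in, $S$ splits into two pieces. The simple-root contribution is bounded by $k\sum_{P} ||P||^{-2} = k\sum_d \pi(d)/q^{2d} \le k\sum_d d^{-1}q^{-d} = O(1)$. The multiple-root contribution is at most $k\sum_{P\mid\Delta}||P||^{-1}$, and the main step is to bound this sum under the constraint $\sum_{P\mid\Delta}\deg P \le \deg\Delta = O(n)$. Since the primes of degree $d$ number $\approx q^d/d$ and contribute $\approx 1/d$ to the reciprocal-norm sum while consuming $\approx q^d$ in degree, filling in primes from smallest degree upward caps the effective degrees at $d \le \log_q n + O(1)$, yielding $\sum_{P\mid\Delta}||P||^{-1} \le \ln\log_q n + O(1)$, which gives \eqref{S bound}. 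For $S(m_0)$ the same split works: the first piece contributes $O(1/(m_0 q^{m_0}))$ by direct summation on $d \ge m_0$, and the second contributes $O(n/(m_0 q^{m_0}))$ since $\Delta$ has at most $\deg\Delta/m_0$ prime divisors of degree $\ge m_0$, each of norm $\ge q^{m_0}$; combined this gives \eqref{Sm0 bound}.

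For $c_f$ I would take logarithms and split by prime size. For primes with $||P|| > 4k$, the Hensel bound gives $\rho(P^2)/||P||^2 \le 2k/||P|| \le 1/2$, so the elementary estimate $\log(1-x) \ge -2x$ for $x \in [0,1/2]$ yields a contribution of at least $-2S \ge -2k\ln\log_q n - O(1)$. For the $O_{q,k}(1)$-many primes with $||P||\le 4k$, the assumption $c_f > 0$ (no local obstruction) forces $\rho(P^2) \le ||P||^2 - 1$, so each factor is at least $||P||^{-2} \ge (4k)^{-2}$, giving total contribution $O(1)$. Combining yields $\log c_f \ge -2k\ln\log_q n - O(1)$, which is \eqref{c_f bound}. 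The main obstacle is just the discriminant-weighted optimization $\sum_{P\mid\Delta}||P||^{-1} \le \ln\log_q n + O(1)$, a function-field analogue of the classical $\sum_{p\mid n}1/p = O(\log\log n)$ bound; everything else reduces to the Hensel bookkeeping and convergent geometric sums.
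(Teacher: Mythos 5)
Your plan follows the paper's architecture quite closely -- Hensel lifting, a bad-prime set defined by a resultant-type quantity of $O(n)$ degree, the same filling-in-from-below optimization to bound $\sum_{P \text{ bad}} 1/\lVert P\rVert$, and a two-regime treatment of the Euler product for $c_f$. However, there is a genuine gap in the choice of bad-prime set: you take $\Delta = \disc_x(f)$ and assert it is nonzero because $f$ is square-free, but in characteristic $p$ this is false. If $f$ has any irreducible factor lying in $\fq[t,x^p]$ (inseparable in $x$), then $\partial f/\partial x$ shares that factor with $f$ and $\disc_x(f) \equiv 0$. The simplest example is $f = x^p - t$, which is irreducible (hence square-free) in $\fq[t,x]$ yet has $\partial f/\partial x = 0$ identically. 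In that situation your implication ``$s_P > 0 \Rightarrow P \mid \Delta$'' degenerates into a vacuous statement, and you no longer have any control on the number or size of bad primes. This is not a peripheral edge case: the corollary about representing $N$ as $x^k + r$ with $r$ square-free takes $f = N - x^k$, which is $x$-inseparable whenever $p \mid k$.

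The paper deals with exactly this by factoring $f = f_i f_s$ into its $x$-inseparable part $f_i$ and separable part $f_s$, and defining $R = \rest_x(f_i, \partial f/\partial t)\cdot\rest_x(f_s, \partial f/\partial x)$. For the separable part one runs the usual $x$-derivative Hensel argument that you describe; for the inseparable part the point is a $t$-derivative Hensel argument: if $P \mid f_i(a)$ but $\partial f/\partial t(a) \not\equiv 0 \pmod P$, then $P^2 \nmid f(t, a(t))$ because $\frac{\d}{\d t}f(t,a(t)) \equiv \partial f/\partial t(a) \pmod P$. Square-freedom of $f$ guarantees that both resultants are nonzero, so $R \neq 0$ with $\deg R = O(n)$, and your optimization then applies verbatim. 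A secondary point in the same spirit: your bound $r_P + s_P \le k$ implicitly assumes $f \bmod P$ is a nonzero polynomial of degree $\le k$, which fails when $P$ divides the content of $f$; the paper handles this by observing that content primes also divide $R$, and giving a separate (easy) bound $\rho(P^2) \le k\lVert P\rVert$ for them. If you replace $\Delta$ by the paper's $R$ throughout and add the content-prime case, your argument becomes correct and essentially coincides with the paper's.
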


Write $f(t,x) =  f_i(t,x)f_s(t,x)$ where $f_i(t,x) \in \fq[t,x^p]$ is the product of
all irreducible factors of $f(t,x)$ which are inseparable in $x$,
and $f_s(t,x)$ has no $x$-inseparable factors.
From the fact that $f(t,x)$ is square-free, we immediately see that $f_i,f_s$ are co-prime
and square-free, and furthermore $f_i$ is co-prime to $\parttf{f_i}$
and $f_s$ is co-prime to $\partxf{f_s}$: Indeed, if $P(t,x)$ is an irreducible common
divisor of $f_s$ and $\partxf{f_s}$, it is easy to see that either $P^2 \mid f_s$, 
which contradicts $f_s$ being square-free, or else $P \mid \partxf{P}$, which then
implies that $P$ is inseparable in $x$ -- contradicting the fact that $f_s$ has
no inseparable factors. Similarly, if $P(t,x)$ is an irreducible common divisor
of $f_i, \parttf{f_i}$, then again either
$P^2 \mid f_i$, which leads to contradiction, or $P$ is inseparable in $t$. 
Since both $f_i, \parttf{f_i}$ are in $\fq[t,x^p]$, either $P^p$ must also
be a common divisor, contradicting square-freedom, or $P$ is also in $\fq[t,x^p]$.
But since it is also inseparable in $t$, it follows that $P \in \fq[t^p,x^p]$,
which means that $P$ is a $p$-th power, contradicting its irreducibility. 

Now, define $R(t) = \rest_x(f_i,\parttf{f})\rest_x(f_s,\partxf{f}) \in \fq[t]$.
Note that $R(t)$ is non-zero: Indeed, by the above claims,
$\parttf{f} = f_s\parttf{f_i} + f_i \parttf{f_s}$ is co-prime to $f_i$,
and $\partxf{f} = f_i\partxf{f_s} + f_s \partxf{f_i}$ is co-prime to $f_s$.
Note that the $x$- and $t$-degrees of the polynomials $f_i,f_s$ and their derivatives
are all at most $k$ and $n$, respectively. Therefore, both resultants can be
given as polynomials of degree at most $2k$ in the 
$\fq[t]$-coefficients of their arguments, each of which is of degree at most $n$.
Therefore $\deg R \le 4kn = O(n)$. In particular $R$ has at most $\frac{4kn}{m_0}$ prime factors 
of degree at least $m_0$. 

\hspace{-1pt}For any prime $P \in \PR$ such that $P \nmid R$, the residue $f \mod P \in (\fq[t]/(P))[x]$ is non-trivial 
(as every prime dividing the content of $f$ also divides $R$). The residue also has degree $\le k$, 
which then implies $\rho(P) \le k$. 
Let $a \in \fq[t]$ represent a residue class in $\rho(P)$, i.e. 
satisfy $f(a) \equiv 0 \pmod P$. If furthermore
$\frac{\partial f}{\partial x}(a) \not \equiv 0 \pmod P$, then by Hensel's lemma
there is a unique lifting of $a$ to a residue $\tilde a \mod P^2$ satisfying 
$\tilde a \equiv a \pmod P, f(\tilde a) \equiv 0 \pmod {P^2}$. 

If, on the other hand, $\frac{\partial f}{\partial x}(a) \equiv 0 \pmod P$,
then $P$ does not divide $f_s(a)$: 
Otherwise, $a$ is a common root of $f_s$ and $\partxf{f}$ modulo $P$,
which then implies $P \mid \rest_x(f_s,\partxf{f})$, contradicting $P \nmid R$.
From $P \mid f(a) = f_s(a)f_i(a)$ it then follows that $P \mid f_i(a)$, and by
the same argument as above, we must then have
$\parttf{f}(a) \not \equiv 0 \pmod P$, and thus
$$\tfrac{\d f(t,a(t))}{\d t} = \parttf{f}(a)+\partxf{f}(a)\tfrac{\d a}{\d t} 
\equiv \parttf{f}(a) \not \equiv 0 \pmod P.$$
In particular, it follows that $P(t)^2 \nmid f(t,a(t))$, for any such $a$.
Therefore no residue $\tilde a \mod P^2$ with $\tilde a \equiv a \pmod P$
satisfies $f(\tilde a) \equiv 0 \pmod {P^2}$. 

We have shown that for every residue $a \mod P \in \rho(P)$, there is at most one 
lifting modulo $P^2$ which is in $\rho(P^2)$, assuming $P \nmid R$.
Therefore for such primes, $\rho(P^2) \le \rho(P) \le k$.

The contribution of these primes to $S$ is thus at most
\begin{align*}
 \sum_{P \in \PR : P \nmid R} \frac{\rho(P^2)}{||P||^2} &\le \sum_{P \in \PR} \frac{k}{||P||^2} =
  \sum_{d=1}^{\infty} \sum_{P \in \PR^{= d}} \frac{k}{q^{2d}} \\
  & \le \sum_{d=1}^{\infty} \frac{k}{q^{2d}}\frac{q^d}{d} 
  = k \sum_{d=1}^{\infty} \frac{1}{dq^d} \le \frac{k}{q-1} = O(1),
\end{align*}
and similarly their contribution to the tail $S(m_0)$ is at most
\begin{align}\nonumber
 \sum_{P \in \PR^{\ge m_0} : P \nmid R} \frac{\rho(P^2)}{||P||^2} 
 \le k \sum_{d=m_0}^{\infty} \frac{1}{dq^d} 
 = O\left(\frac{1}{m_0q^{m_0}}\right). 
\end{align}

On the other hand, for any prime $P \mid R$, we have $\rho(P^2) \le k||P||$. Indeed, if $P$ divides the content of $f$, then
$\tfrac{f}{P} \in (\fq[t]/(P))[x]$ is non-trivial, as $f$ is square-free and in particular $P^2 \nmid f$. Thus 
\begin{align*}
 \rho(P^2) &= \#\{a \mod P^2 : f(a) \equiv 0\!\!\pmod {P^2}\} \\
 &=\#\{a \mod P^2 : \tfrac{f(a)}{P} \equiv 0 \pmod P\} \\
 &=\#\{a \mod P : \tfrac{f(a)}{P} \equiv 0 \pmod P\}\cdot||P|| \le k||P||,
\end{align*}
while for primes $P \mid R$ that do not divide the content, we simply have $\rho(P) \le k$ and therefore
$\rho(P^2) \le ||P||\rho(P) \le k||P||$.\footnote{A sharper argument shows that 
for primes $P \mid R$ that do not divide the content, we in fact have 
$\rho(P^2) \le \tfrac{k}{2}||P||$, as any root of $f$ modulo $P$ that lifts to $||P||$ roots
modulo $P^2$ must be a double root modulo $P$, and there can be only $k/2$ distinct double roots 
modulo $P$. This allows us to slightly improve the lower bound on $c_f$ for content-free polynomials,
but not in general.
}

Therefore the contribution of the primes $P \mid R$ to the sum $S(m_0)$ is at most
\begin{align*} \nonumber
 \sum_{\substack{P \in \PR^{\ge m_0} \\ P \mid R}} \frac{\rho(P^2)}{||P||^2} \
 &\le \sum_{\substack{P \in \PR^{\ge m_0} \\ P \mid R}} \frac{k||P||}{||P||^2}
 = \sum_{\substack{P \in \PR^{\ge m_0} \\ P \mid R}} \frac{k}{||P||} \\ 
 &\le \sum_{\substack{P \in \PR^{\ge m_0} \\ P \mid R}} \frac{k}{q^{m_0}} 
 \le \frac{4kn}{m_0}\frac{k}{q^{m_0}} = O\left(\frac{n}{m_0q^{m_0}}\right).
\end{align*}

In order to obtain a bound on their contribution to $S$, denote for all $d > 0$,
$u_d = \#\{P \in \PR^{=d} : P \mid R\}$, and let $x_d = d u_d$. The contribution to $S$ is
\begin{align*} \nonumber
 \sum_{\substack{P \in \PR\\ P \mid R}} \frac{\rho(P^2)}{||P||^2} \
 &\le \sum_{\substack{P \in \PR \\ P \mid R}} \frac{k}{||P||} 
 = k \sum_{d=1}^{\infty} \frac{u_d}{q^d} 
 = k \sum_{d=1}^{\infty} \frac{x_d}{dq^d}.
\end{align*}
Note that for all $d >0$, $d u_d \le d\pi_q(d) \le q^d$, and 
$\sum_{d=1}^{\infty} du_d \le \deg R \le 4kn$. As the sequence $\frac{1}{dq^d}$ is decreasing,
it follows that the maximum of $\sum_{d=1}^{\infty} \frac{x_d}{dq^d}$ under the constraints
$0 \le x_d \le q^d$, $\sum_{d=1}^{\infty} x_d \le 4kn$ is attained when $x_d = q^d$ for all $d < n_0$,
$x_{n_0} = 4kn - \sum_{d=1}^{n_0 - 1} x_d$, and $x_d = 0$ for all $d > n_0$. Note that $n_0$ is 
then determined uniquely by $0 \le x_{n_0} \le q^{n_0}$. Such values
would not necessarily correspond to any actual $R$,
but will serve for obtaining an upper bound. It follows that
$q^{n_0 - 1} \le 4kn$, hence $n_0 \le \log_q(4kq n) = \log_q(n) + O(1)$. Thus
\begin{align*}
 \sum_{\substack{P \in \PR\\ P \mid R}} \frac{\rho(P^2)}{||P||^2} 
 \le k \sum_{d=1}^{\infty} \frac{x_d}{dq^d} \le k \sum_{d=1}^{n_0} \frac{1}{d} 
 & = k (\ln(n_0) + O(1)) \\
 & = O(\ln\ln n). 
\end{align*}

It is quite clear that for both $S,S(m_0)$, the bounds for the 
contributions of $P \mid R$ dominate
those of $P \nmid R$, and yield the bounds \eqref{S bound},\eqref{Sm0 bound}.

We now derive the lower bound $c_f \gg (\log_q n)^{-k-o(1)}$
using the upper bound on $S$. Let $\epsilon > 0$, and split the summands
of $S$ into those greater and lesser than $\epsilon$. As each term is at most $\frac{k}{||P||}$,
it follows that only boundedly many are greater than $\epsilon$, and they of bounded degree, thus the contribution of these terms
to the product $c_f = \prod_{P \in \PR}\left(1-\frac{\rho(P^2)}{||P||^2}\right)$ would be bounded below by 
some positive constant $C_\epsilon = C_{k,q,\epsilon} > 0$ independent of $n$ (assuming no local obstructions
exist, so that $1-\frac{\rho(P^2)}{||P||^2} \ge \frac{1}{||P||^2}$  for all $P$). On the other hand,
for summands such that $x = \frac{\rho(P^2)}{||P||^2} < \epsilon$, we have the inequality
$\ln(1-x) > -\frac{x}{1-\epsilon}$, and hence the contributions of such terms to the product $c_f$
is bounded below by $\exp\left(-\frac{S}{1-\epsilon}\right) \gg_{k,q} (\log_q n)^{-k/(1-\epsilon)}$. Taking the
two terms together then yields $c_f \gg_{k,q} C_{\epsilon}(\log_q n)^{-k+O(\epsilon)}$. As $C_\epsilon$ is 
independent of $n$, letting $\epsilon \rightarrow 0$ sufficiently slowly as
$n \rinf$ would allow us to replace the bound by the aforementioned 
$c_f \gg (\log_q n)^{-k-o(1)}$. However, the exact exponent will have negligible 
relevance to our computations, and the bound \eqref{c_f bound} obtained
by choosing $\epsilon =\tfrac12$ suffices for most purposes.
 


%
%

\subsection{Bounding $N''$: Medium primes} \label{N'' sect}

The bound on the medium primes is the easiest of the three, 
and follows immediately from a simple union bound.
Indeed, $m_1$ is chosen such that for any prime $P \in \PR^{< m_1}$ we have $\deg(P^2) < m$ 
and thus
$\#\{a \in \fqtm : P^2 \mid f(a)\} = \frac{\rho(P^2)}{||P||^2}q^m$. Therefore
\begin{align*}
 \#N'' &= \#\{a \in \fqtm : \exists P \in \PR^{\ge m_0} \cap \PR^{< m_1}, P^2 \mid f(a)\} \\
 &= \#\bigcup\nolimits_{P \in \PR^{\ge m_0} \cap \PR^{< m_1}}\{a \in \fqtm : P^2 \mid f(a)\} \\
 &\le \sum\nolimits_{P \in \PR^{\ge m_0} \cap \PR^{< m_1}} \#\{a \in \fqtm : P^2 \mid f(a)\} \\
 &= \sum\nolimits_{P \in \PR^{\ge m_0} \cap \PR^{< m_1}} \frac{\rho(P^2)}{||P||^2}q^m \\
 &\le q^m \sum\nolimits_{P \in \PR^{\ge m_0}} \frac{\rho(P^2)}{||P||^2} 
 = q^m S(m_0).
\end{align*}

It now suffices to choose $m_0$ large enough so that $S(m_0) = o(c_f)$. By \eqref{Sm0 bound},
\eqref{c_f bound}, we see that we may take any $m_0$ such that 
\iftoggle{omegas}
{$m_0 q^{m_0} = \omega(n (\log_qn)^{2k})$}
{$\frac{m_0 q^{m_0}}{n (\log_qn)^{2k}} \rinf$}, 
which is clearly satisfied when e.g.
\iftoggle{omegas}
{$m_0 = \log_qn + 2k\log_q\log_q n + \omega(1)$}
{$m_0 - \log_qn - 2k\log_q\log_q n \rinf $}.
For simplicity, we shall write this condition as $m_0 \gg \log_q n$:
For $n \rinf$, the implied constant may be any constant greater than 1, and
if $n$ is bounded we only require $m_0 \rinf$. 

\subsection{Bounding $N'$: Small primes} \label{N' sect}
We write $\PR(m_0) = \prod_{P \in \PR^{<m_0}} P$.
A standard sieve theory argument gives 
\begin{equation*}\label{N' comp}
 \#N' = \sum_{D \mid \PR(m_0)} \mu(D)\#\{a \in \fq[t]^{<m} : D^2 \mid f(a)\}.
\end{equation*}
For any square-free polynomial $D \in \fq[t]$, let $\nu(D)$ be the number of its prime factors.
For a non-negative integer $k$, define
$$n_k = \sum_{{\substack{D \mid P(m_0)\\ \nu(D) = k}}} \#\{a \in \fq[t]^{<m} : D^2 \mid f(a)\}$$
so that $\#N' = \sum_{k=0}^{\infty} (-1)^k n_k$. Brun's sieve is essentially the observation
that the partial sums $N_r = \sum_{k=0}^{r} (-1)^k n_k$ alternate around the limit $\#N'$, i.e.
$\#N' \le N_r$ for all even $r$, and $\#N' \ge N_r$ for all odd $r$ \cite[Chapter 6]{CoMu}. It will therefore
suffice to prove that $N_r = c_f q^m (1+o(1))$ for sufficiently large $r$, which will then
result in both upper and lower bounds on $\#N'$.

Suppose $m_0,r$ satisfy $2m_0r \le m$. It follows that for any $D \mid P(m_0)$ with $\nu(D) \le r$ 
we have $\deg(D^2) < 2m_0r \le m$. Such $D$ then satisfies 
$$\#\{a \in \fq[t]^{<m} : D^2 \mid f(a)\} = \rho(D^2)q^{m-2\deg D}.$$
Therefore for all $k \le r$, we have 
$n_k = \sum_{D | P(m_0), \nu(D) = k}\rho(D^2)q^{m-2\deg D}$, hence
$$N_r = q^m \sum_{{\substack{D \mid P(m_0)\\ \nu(D) \le r}}} 
\mu(D) \frac{\rho(D^2)}{||D||^2} =: q^m U(r,m_0). $$

We now wish to estimate $U(r,m_0)$. Note that 
\begin{align*}
  U(\infty,m_0) &  = \sum_{D \mid P(m_0)} \mu(D) \frac{\rho(D^2)}{||D||^2}
   = \prod_{P \in \PR^{<m_0}} \left(1-\frac{\rho(P^2)}{||P||^2}\right) \\
  & = c_f  \prod_{P \in \PR^{\ge m_0}} \left(1-\frac{\rho(P^2)}{||P||^2}\right)^{-1}
  = c_f(1+O(S(m_0)) = c_f(1+o(1)),
\end{align*}
where in the last step we assume $m_0$ is chosen such that $S(m_0) = o(c_f)$, as was already
required for bounding $\#N''$, so in particular $S(m_0) = o(1)$.

It will thus suffice to bound $U(\infty,m_0) - U(r,m_0)$. Let us denote for any non-negative
integer $k$, $v_k = \sum_{D | P(m_0), \nu(D) = k}\frac{\rho(D^2)}{||D||^2}$. Note 
that $v_k$ is the $k$-th elementary symmetric polynomial of the 
finite multiset $\left\{\frac{\rho(P^2)}{||P||^2} : P \in \PR^{<m_0}\right\}$, whose elements
are positive real numbers. It follows that $v_k \le \tfrac{v_1^k}{k!}$. \linebreak
Furthermore $v_1$ is
a partial sum of the singular sum $S$, hence $v_1 \le \lambda =  k \ln\log_q n + O(1)$ 
by \eqref{S bound}. Suppose $r = \alpha \lambda$ for some $\alpha > 2$. Then

\begin{align*}
 &\ |U(\infty,m_0) - U(r,m_0)| = \left|\sum_{k=r+1}^{\infty} (-1)^k v_k\right| 
 \le  \sum_{k=r+1}^{\infty} v_k \le \sum_{k=r+1}^{\infty} \frac{\lambda^k}{k!} \\
 < &\ \sum_{k=r+1}^{\infty} \frac{\lambda^r}{r!}\alpha^{r-k} < \frac{\lambda^r}{r!}
 < \frac{\lambda^r}{(r/e)^r} = \left(\frac{e\lambda}{r}\right)^r 
 = \left(\frac{e}{\alpha}\right)^{\alpha \lambda} \\
 = &\ O\left((\log_q n)^{-\alpha\ln(\alpha/e) k}\right).
\end{align*}
\iftoggle{omegas}
{Now if $\alpha = \omega(1)$ then 
$|U(\infty,m_0) - U(r,m_0)| = (\log_qn)^{-k\omega(1)} = o(c_f)$.}
{Now if $\alpha \ln(\alpha/e)$ is sufficiently large\footnote{In the case
$n \rinf$ it suffices to choose $\alpha \ln(\alpha/e) > 2$, 
which holds for $\alpha > 4.32$. If $n$ is bounded, take $\alpha \rinf$.
}, then by \eqref{c_f bound},
$$|U(\infty,m_0) - U(r,m_0)| \ll (\log_qn)^{-k\alpha\ln(\alpha/e)} = o(c_f).$$}

We have thus shown that for 
\iftoggle{omegas}
{$r = \omega(\log_q\log_qn)$}
{sufficiently large $r$ satisfying $r \gg \log_q\log_q n$ and $r \rinf$},
$N_r =  q^m c_f(1+o(1))$, hence also $\#N' =c_f q^m(1+o(1))$, as claimed.

For the proofs of the bounds on $N', N''$ to be valid simultaneously, we must be able to choose
\iftoggle{omegas}
{$m_0 = \omega(\log_qn)$ and $r = \omega(\log_q\log_qn)$ 
with $2m_0r \le m$, which is of course possible if and only if
$m = \omega(\log_qn \log_q\log_qn)$,} 
{$m_0,r$ with $m_0 \gg \log_qn$, $r \gg \log_q\log_qn $, $m_0,r \rinf$
and $2m_0r \le m$. This is of course possible if and only if
$m \gg \log_qn \log_q\log_qn$ and $m \rinf$,}
hence our condition on $m$ in Theorem \ref{brun thm}. Careful examination
of the required lower bounds on $r, m_0$ would allow the constant
in the constraint $m \gg \log_q \log_q\log_qn$ to be as small as $9k\ln q$
for sufficiently large $n$. 

\subsection{Bounding $N'''$: Large primes}

The large primes require the most sophistication to estimate, though they contribute the smallest error. To do so, we apply
Poonen's technique of replacing our target polynomial by an equivalent multivariate polynomial with a simpler $t$-derivative, and carefully
retrace Lando's bounds on the corresponding contributions to $N'''$, noting the size of our coefficients.

Given the polynomial $f(x) \in \fq[t][x]$, we define a new polynomial $F$ by
$F(y_0,\dots,y_{p-1}) = f(y_0^p + ty_1^p + \cdots + t^{p-1}y_{p-1}^p) \in 
\fq[t][y_0^p,y_1^p,\dots,y_{p-1}^p].$
 Note that $\deg_x(f) \le k, \deg_t(f) \le n$ together imply a bound on $F$'s coefficients and degrees:
$\deg_t(F) < n + pk = O(n)$, $\deg_{y_i}(F) \le pk$.

Poonen's lemmas show that $f$ being square-free implies $F$ is, also \cite[Lemma 7.2]{Poonen}; which in turn implies that $F$ and
$G = \frac{\partial F}{\partial t}$ are coprime \cite[Lemma 7.3]{Poonen}\footnote{
Poonen in fact shows only that they are coprime in $\fq(t)[y_0,\dots,y_{p-1}]$,
whereas we need them to be coprime in $\fq[t][y_0,\dots,y_{p-1}]$. This is easy to verify --
it is enough to check that they have no common factor $P \in \fq[t]$. Such a factor 
will necessarily divide the contents of both $F(y_0,0,\dots,0) = f(y_0^p)$ and 
$G(y_0,0,\dots,0) = \frac{\partial f}{\partial t}(y_0^p)$. This in turn implies
that $P^2$ divides $f$, contradicting our assumption
that it is square-free.}. On the other hand, for any $y \in (\fq[t])^p$, $P^2 \mid F(y)$ 
if and only if $P \mid F(y)$ and $P \mid G(y)$. This is due to the fact that, as the $y_i$-s appear in $F$ only with exponents
divisible by $p$, $G(y) = \frac{\d(F(y))}{\d t}$ for all $y$.
Finally observe that $\deg_t G \le \deg_t F = O(n)$,  $\deg_{y_i}(G) \le \deg_{y_i}(F) \le pk$.

Let $m_p = \ceil{\frac{m}{p}} \le \ceil{\frac{m}{2}} = m_1$, and for any positive integer $l$, let $B_l = (\fq[t]^{<m_p})^{l+1}$.
Note that when we let the $p$-tuple
$y$ range over all $B_{p-1}$, $a = y_0^p + ty_1^p + \cdots + t^{p-1}y_{p-1}^p$ ranges over all $\fq[t]^{<pm_p}$, which
contains $\fqtm$. Thus

\begin{align}
 \nonumber \#N''' & = \#\{a \in \fqtm : \exists P \in \PR^{\ge m_1}, P^2 \mid f(a)\} \\
 \nonumber & \le \#\{y \in B_p : \exists P \in \PR^{\ge m_1}, P^2 \mid f(y_0^p + ty_1^p + \cdots + t^{p-1}y_{p-1}^p)\} \\
 \nonumber & = \#\{y \in B_p : \exists P \in \PR^{\ge m_1}, P^2 \mid F(y)\} \\
 \nonumber & = \#\{y \in B_p : \exists P \in \PR^{\ge m_1}, P \mid F(y) \text{ and } P \mid G(y) \} \\
 \label{N''' est}
 & = O_{p,pk}\left(\frac{n+m_1}{m_1}q^{(p-1)m_p}\right)
 = O_{p,k}\left(\frac{n+m}{mq^{\tfrac mp - p}}q^m\right),
\end{align}
where the bound in the final line follows from the following proposition, analogous to \cite[Proposition 5]{Lando}:
\begin{prop}\label{B_k bound}
 Let $k,l,n,m_p,m_1$ be positive integers with $m_1 \ge m_p$, let $f,g \in \fq[t][y_0,\dots,y_l]$ be coprime polynomials
 in $l+1$ variables with
 $\deg_{y_i}(f), \deg_{y_i}(g)\le k$ and $\deg_t(f), \deg_t(g) \le n$, and $B_l$ as above.
 Define $$\NC_l(f,g) = \#\{y \in B_l : \exists P \in \PR^{\ge m_1}, P \mid f(y) \text{ and } P \mid g(y) \}.$$
 Then $\NC_l(f,g) = O_{l,k}\left(\frac{n+m_1}{m_1}  q^{lm_p}\right)$.
\end{prop}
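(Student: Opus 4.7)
The plan is to prove Proposition \ref{B_k bound} by induction on the number $l \ge 0$ of auxiliary variables, following the resultant-based approach that Lando uses in this setting.

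For the base case $l=0$, form the resultant $R(t) = \rest_{y_0}(f,g) \in \fq[t]$. Coprimality of $f,g$ makes $R$ nonzero, and the standard degree bound gives $\deg_t R = O_k(n)$. Any prime $P \in \PR$ dividing both $f(y_0)$ and $g(y_0)$ for some $y_0 \in \fq[t]^{<m_p}$ must divide $R$ (via the B\'ezout identity expressing $R$ as an $\fq[t,y_0]$-combination of $f$ and $g$), so $R$ has at most $O_k(n/m_1)$ prime factors of degree $\ge m_1$. For each such $P$, the inequality $\deg P \ge m_1 \ge m_p$ forces distinct elements of $\fq[t]^{<m_p}$ into distinct residue classes modulo $P$, and $f \bmod P$ has at most $k$ roots in $\fq[t]/(P)$ (degenerate cases where $P$ divides the $y_0$-content of $f$ are ruled out by coprimality of the contents of $f$ and $g$, up to a small correction). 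This yields $\NC_0(f,g) = O_k(n/m_1) = O_k((n+m_1)/m_1)$.

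For the inductive step, partition $B_l$ by the value of $y_l \in \fq[t]^{<m_p}$. If the specializations $\tilde f = f(\cdot,y_l)$ and $\tilde g = g(\cdot,y_l)$ remain coprime in $\fq[t][y_0,\dots,y_{l-1}]$, apply the inductive hypothesis with the enlarged $t$-degree bound $n + km_p$, obtaining $O_{l-1,k}((n+km_p+m_1)/m_1 \cdot q^{(l-1)m_p})$ solutions for that $y_l$. Summing over at most $q^{m_p}$ good values of $y_l$, and using $km_p \le km_1$ to absorb the additive shift, produces the desired $O_{l,k}((n+m_1)/m_1 \cdot q^{lm_p})$. The ``bad'' values of $y_l$, where coprimality fails after specialization, are controlled by viewing the nonzero resultant $R = \rest_{y_0}(f,g) \in \fq[t,y_1,\dots,y_l]$, together with the leading coefficients of $f,g$ in $y_0$, as polynomials in $y_l$ with coefficients in $\fq[t,y_1,\dots,y_{l-1}]$: their roots in $\fq(t) \subset \fq(t,y_1,\dots,y_{l-1})$ number at most $O_k(1)$, so there are $O_k(1)$ bad $y_l \in \fq[t]^{<m_p}$. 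For each, the trivial bound $q^{lm_p}$ suffices, contributing a total of $O_k(q^{lm_p})$, which is absorbed into the main bound since $(n+m_1)/m_1 \ge 1$.

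The main obstacle is handling the bad specializations cleanly: one must verify that specializing $y_l$ in $R$ yields, up to controlled leading-coefficient degenerations, the resultant of $\tilde f,\tilde g$, and that both failure modes (vanishing of the specialized resultant, and drop in $y_0$-degree of $\tilde f$ or $\tilde g$) individually contribute only $O_k(1)$ bad $y_l$. Some bookkeeping is also needed to ensure the implied constants $O_{l,k}$ remain bounded as the induction unfolds, tracking how the extra factor of $k$ introduced at each step propagates through the recursion.
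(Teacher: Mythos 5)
Your inductive step has a genuine gap in the treatment of ``bad'' values of $y_l$. The vanishing of $R(y_l{=}a)$ and of the $y_0$-leading coefficients of $f,g$ only controls coprimality of $\tilde f,\tilde g$ \emph{over the fraction field} $\fq(t,y_1,\dots,y_{l-1})$; it does not prevent the specializations from acquiring a common $y_0$-free (content) factor in $\fq[t,y_1,\dots,y_{l-1}]$, and the set of $a$ where this happens need not be $O_k(1)$. Concretely, take $l=1$, $f = ty_0 + y_1$, $g = ty_0 + y_1^2$. These are coprime in $\fq[t,y_0,y_1]$, the resultant is $R = \rest_{y_0}(f,g) = t\,y_1(y_1-1)$, whose only roots in $y_1$ are $0$ and $1$, and the $y_0$-leading coefficients (both equal to $t$) never vanish under specializing $y_1$. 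Yet for every $a \in \fq[t]^{<m_p}$ divisible by $t$ --- there are $q^{m_p-1}$ of them --- one has $\tilde f = t(y_0 + a/t)$ and $\tilde g = t(y_0 + a^2/t)$, which share the factor $t$ and hence are not coprime in $\fq[t][y_0]$. So the number of $y_l$ where the inductive hypothesis fails to apply is $q^{m_p-1}$, not $O_k(1)$, and charging each such $y_l$ the trivial $q^{(l-1)m_p}$ overshoots the target $O_{l,k}\bigl(\tfrac{n+m_1}{m_1}q^{lm_p}\bigr)$ by roughly a factor of $q^{m_p-1}$. (In this example the extra bad specializations happen to be harmless because the shared content is the small prime $t$ and contributes nothing to $\NC_{l-1}$, but your argument never establishes anything of the sort; an additional device --- splitting off the common content of $\tilde f,\tilde g$ and showing it rarely acquires a prime factor of degree $\ge m_1$ --- would be needed.)

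The paper avoids this entirely by a different decomposition: rather than specialize $y_l$ and recurse, it writes $f = f_C f_I$, $g = g_C g_I$ as $y_l$-content times $y_l$-primitive part (so $f_C,g_C \in \fq[t,y_0,\dots,y_{l-1}]$), treats the pure-content case $\NC_l(f_C,g_C) = q^{m_p}\NC_{l-1}(f_C,g_C)$ by induction, and reduces the remaining cases to bounding $\NC_l(f, R)$ with $R = \rest_{y_l}(f_I,g_I) \in \fq[t,y_0,\dots,y_{l-1}]$ (a resultant in $y_l$, not $y_0$), which is then factored into irreducibles $R_j$ and handled according to whether $R_j$ lies in $\fq[t]$ (and has degree $< m_1$ or $\ge m_1$) or depends on some $y_i$. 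This keeps all possible common factors, including contents, in view at every stage. Your base case $l=0$ is essentially sound, and specializing the last variable and recursing is an appealing idea, but as written the argument over-counts its own good cases and under-counts its bad ones.
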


Thus, from \eqref{N''' est} and \eqref{c_f bound}, it follows that $\#N''' = o(c_f q^m)$ when e.g.
\iftoggle{omegas}
{$m = p(\log_qn + 2k \log_q \log_qn) +\omega(1)$,}
{$m - p(\log_qn + 2k \log_q \log_qn) \rinf$,}
which is certainly the case under the assumptions 
of Theorem \ref{brun thm}. 

Before we prove proposition \ref{B_k bound}, we first need a simpler bound,
slightly generalizing \cite[Proposition 6]{Lando} and giving exact bounds.
\begin{prop}\label{B_k 0 bound}
 Let $k,l,n,m_p,f,B_l$ be as in Proposition \ref{B_k bound}, and suppose $f$ is not identically $0$. Then
 $$\#\{y \in B_l : f(y) = 0\} \le k(l+1)q^{lm_p}.$$
\end{prop}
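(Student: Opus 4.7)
\medskip

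\noindent\textbf{Proof plan.} The statement is precisely a Schwartz--Zippel type bound applied to the field $\fq(t)$ with the finite evaluation set $S = \fq[t]^{<m_p}$ of size $q^{m_p}$. The plan is to reproduce the standard inductive argument, keeping careful track of the per-variable degree bound $\deg_{y_i}(f)\le k$ so that the multiplicative constant comes out to $k(l+1)$. The bound on $\deg_t(f)$ plays no role here and is only listed for uniformity with the statement of Proposition \ref{B_k bound}; indeed, once we regard $f$ as a polynomial over the field $\fq(t)$, the auxiliary variable $t$ is invisible.

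First, I would reduce to an induction on the number of variables $l+1$. For the base case $l=0$, $f$ is a nonzero univariate polynomial over the field $\fq(t)$ of degree at most $k$, so it has at most $k$ roots in $\fq(t)$, in particular at most $k = k(l+1)q^{l m_p}$ roots in $B_0 = \fq[t]^{<m_p}$.

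For the inductive step, I would expand
\[
 f(y_0,\dots,y_l) \;=\; \sum_{i=0}^{k} g_i(y_1,\dots,y_l)\, y_0^i,
\]
with $g_i \in \fq[t][y_1,\dots,y_l]$, and pick the largest $j$ for which $g_j \neq 0$. The zero set splits according to whether or not $g_j(y_1,\dots,y_l) = 0$. On the open stratum $\{g_j \neq 0\}$, for each fixed $(y_1,\dots,y_l) \in B_{l-1}$ the specialization $f(y_0,y_1,\dots,y_l)$ is a nonzero polynomial in $y_0$ of degree $\le k$ over $\fq(t)$, hence has at most $k$ roots in $\fq[t]^{<m_p}$; this contributes at most $k \cdot q^{l m_p}$. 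On the closed stratum $\{g_j = 0\}$, the polynomial $g_j$ is a nonzero element of $\fq[t][y_1,\dots,y_l]$ with $\deg_{y_i}(g_j)\le k$, so by the inductive hypothesis it vanishes on at most $kl\, q^{(l-1)m_p}$ tuples in $B_{l-1}$; each such tuple contributes all $q^{m_p}$ values of $y_0$, giving at most $kl\, q^{l m_p}$ zeros. Summing the two contributions yields $k(l+1)q^{l m_p}$, as desired.

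The argument is essentially routine; the only place one might slip is in the first stratum, where it is important that we are counting roots of the univariate specialization in the field $\fq(t)$ rather than just in the set $\fq[t]^{<m_p}$, so that the standard ``degree bounds number of roots'' applies even though $\fq[t]^{<m_p}$ is not a field. No genuine obstacle arises.
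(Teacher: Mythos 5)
Your proof is correct and is essentially the paper's argument: both induct on the number of variables, split on whether the leading coefficient (with respect to one distinguished variable) vanishes, bound the closed stratum by the inductive hypothesis and the open stratum by the at-most-$k$-roots-of-a-univariate-polynomial bound, and add $kl\,q^{lm_p} + k\,q^{lm_p}$. The only cosmetic difference is that you isolate $y_0$ while the paper isolates $y_l$.
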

\begin{proof}
 If $l = 0$, then $f(y_0)$ is a non-vanishing polynomial of degree at most $k$ in $y_0$. Hence it has at most $k$ roots 
 in all of $\fq[t]$, and in particular $\#\{y \in B_0 : f(y) = 0\} \le k$, as claimed.
 
 We proceed by induction on $l$.
 Consider $f$ as a polynomial in $y_l$, of degree at most $k$, with coefficients in $\fq[t][y_0,\dots,y_{l-1}]$.
 We write it as $f(y',y_l)$, where $y' = (y_0,\dots,y_{l-1})$. 
 Let $f_0 \in \fq[t][y_0,\dots,y_{l-1}]$ be its leading coefficient. Clearly, $f_0$ also satisfies the degree requirements
 of Proposition \ref{B_k 0 bound}, hence by induction,
 \begin{equation}\label{B_k 0 induction}
 \#\{y' \in B_{l-1} : f_0(y') = 0\} \le klq^{(l-1)m_p}.
 \end{equation}
 On the other
 hand, for any $y' \in B_{l-1}$ with $f_0(y) \neq 0$, there are at most $\deg_{y_l}(f) \le k$ values of $y_l$ in all $\fq[t]$
 for which $f(y',y_l) = 0$. Thus 
 \begin{equation}\label{B_k 0 newterm}
 \#\{(y',y_l) \in B_l : f_0(y') \neq 0, f(y',y_l) = 0\} \le k\#B_{l-1} = kq^{lm_p}. 
 \end{equation}
 
 Using both \eqref{B_k 0 induction}, \eqref{B_k 0 newterm}, we finally obtain
 \begin{align*}
  &\#\{(y',y_l) \in B_l : f(y',y_l) = 0\} \\ 
  \le\ &\#\{(y',y_l) \in B_l : f_0(y') = 0\} + \#\{(y',y_l) \in B_l : f_0(y') \neq 0,f(y',y_l) = 0\} \\
   =\ &q^{m_p}\#\{y' \in B_{l-1} : f_0(y') = 0\} + \#\{(y',y_l) \in B_l : f_0(y') \neq 0,f(y',y_l) = 0\} \\
   \le\ &q^{m_p}klq^{(l-1)m_p} + kq^{lm_p} = k(l+1)q^{lm_p}. 
 \end{align*}
\end{proof}

Using exactly the same arguments, one may also show the following 
similar proposition:
\begin{prop} \label{B_k P bound}
 Let $k,l,n,m_p,m_1,f,B_l$ be as in Proposition \ref{B_k bound}, let
 $P \in \PR^{\ge m_1}$ be a large
 prime and suppose $f$ is not identically $0$ modulo $P$. Then
 $$\NC_l(f,P) = \#\{y \in B_l : P \mid f(y)\} \le k(l+1)q^{lm_p}.$$
\end{prop}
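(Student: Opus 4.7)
\medskip

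\noindent\textbf{Proof proposal for Proposition \ref{B_k P bound}.} The plan is to mimic the inductive proof of Proposition \ref{B_k 0 bound} almost verbatim, replacing the condition ``$f(y)=0$'' by ``$P \mid f(y)$'' throughout, with the observation that the hypothesis $\deg P \ge m_1 \ge m_p$ makes the reduction map $\fq[t]^{<m_p} \to \fq[t]/(P)$ injective, so divisibility by $P$ behaves, for our counting purposes, exactly like an equation in the field $\fq[t]/(P)$.

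For the base case $l=0$, the polynomial $\tilde f := f \bmod P$ is a non-zero element of $(\fq[t]/(P))[y_0]$ of degree at most $k$. Since $\fq[t]/(P)$ is a field, $\tilde f$ has at most $k$ roots in $\fq[t]/(P)$, and by the injectivity noted above, at most $k$ values of $y_0 \in \fq[t]^{<m_p}$ satisfy $P \mid f(y_0)$. This gives the desired bound $k(l+1)q^{lm_p} = k$.

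For the inductive step, I would view $f$ as a polynomial in $y_l$ with coefficients in $\fq[t][y_0,\ldots,y_{l-1}]$, but select the ``leading coefficient'' after reducing modulo $P$: let $k' \le k$ be the degree of $\tilde f$ in $y_l$, and let $\tilde f_0 \in (\fq[t]/(P))[y_0,\ldots,y_{l-1}]$ be the (non-zero) coefficient of $y_l^{k'}$ in $\tilde f$. Lift $\tilde f_0$ to some $f_0 \in \fq[t][y_0,\ldots,y_{l-1}]$ satisfying the degree hypotheses of the proposition with $l$ replaced by $l-1$, and with $f_0 \not\equiv 0 \pmod P$. By the inductive hypothesis,
\[
\#\{y' \in B_{l-1} : P \mid f_0(y')\} \le kl\, q^{(l-1)m_p},
\]
and each such $y'$ contributes at most $q^{m_p}$ choices of $y_l$, for a total of $kl\, q^{lm_p}$. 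For the remaining $y' \in B_{l-1}$ with $f_0(y') \not\equiv 0 \pmod P$, the polynomial $f(y',y_l) \bmod P \in (\fq[t]/(P))[y_l]$ has non-vanishing leading coefficient and degree exactly $k' \le k$, so has at most $k$ roots in the field $\fq[t]/(P)$, and hence at most $k$ values of $y_l \in \fq[t]^{<m_p}$ satisfy $P \mid f(y',y_l)$; this contributes at most $k\,q^{lm_p}$. Adding the two cases yields the claimed $k(l+1)q^{lm_p}$.

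The only point requiring a bit of care, and arguably the main technical nuisance, is the handling of the leading coefficient: if one naively uses the leading coefficient of $f$ in $y_l$ over $\fq[t]$, it may vanish modulo $P$, and the inductive hypothesis would not apply to it. Passing to the mod-$P$ reduction before extracting the leading coefficient, and then choosing an arbitrary lift back to $\fq[t][y_0,\ldots,y_{l-1}]$, sidesteps this issue cleanly. Everything else is a direct transcription of the proof of Proposition \ref{B_k 0 bound}.
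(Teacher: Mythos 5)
Your proof is correct and follows exactly the route the paper intends: a direct transcription of the proof of Proposition~\ref{B_k 0 bound} with the condition $f(y)=0$ replaced by $P\mid f(y)$, using the injectivity of the reduction $\fq[t]^{<m_p}\to\fq[t]/(P)$ guaranteed by $\deg P\ge m_1\ge m_p$. You have also correctly identified the one genuine subtlety that the paper's remark ``exactly the same arguments'' glosses over --- that the leading $y_l$-coefficient of $f$ over $\fq[t]$ may vanish modulo $P$, so the inductive hypothesis cannot be applied to it directly --- and your fix (pass to the leading coefficient of $f\bmod P$) is sound; note that the cleanest lift is simply the coefficient of $y_l^{k'}$ in $f$ itself, which automatically satisfies the degree bounds and is nonzero modulo $P$, so no genuine choice of lift is needed.
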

Note that we rely strongly on $m_1 \ge m_p$, which implies that each 
residue class modulo $P$ has at most a single representative in $\fq[t]^{<m_p}$.
We omit the rest of the proof, which is just a repetition of the proof of
Proposition \ref{B_k 0 bound}.
\begin{proof}[Proof of Proposition \ref{B_k bound}]
  Again, we induce on $l$. To avoid repetition, our induction base will be $l=-1$,
  where $f,g \in \fq[t]$, and $B_{-1} = \{()\}$ is a singleton containing only the
  empty tuple. The claim then immediately follows from $f,g$ being coprime in $\fq[t]$,
  i.e. $\nexists P \in \PR$ such that $P \mid f$ and $P \mid g$, and in particular
  $\{y \in B_{-1} : \exists P \in \PR^{\ge m_1}, P \mid f(y) \text{ and } P \mid g(y) \}$ 
  is empty. Hence $\NC_l(f,g) = 0 = O_{k}(\frac{n+m_1}{m_1}  q^{-m_p})$.

  We denote $A_l = \fq[t,y_0,\dots,y_{l-1}]$. Consider $f,g \in A_l[y_l]$
  as single variable polynomials in $y_l$
  with coefficients in the polynomial ring $A_l$, and let 
  $f_C,g_C \in A_l$ be their respective contents. We may then write
  $f = f_Cf_I, g = g_Cg_I$ where $f_I,g_I \in A_l[y_l]$ are indivisible
  by any non-scalar polynomial in $A_l$. Clearly $f_C,f_I$ are coprime to 
  $g_C,g_I$, and all four polynomials have $y_i$-degrees at most $k$ and $t$-degrees at most $n$.
  We also have
  $$\NC_l(f,g) \le \NC_l(f_I,g_I) + \NC_l(f_I,g_C) + \NC_l(g_I,f_C) + \NC_l(f_C,g_C).$$
  Therefore it is enough to show that each of the four summands on the right hand side is
  bounded by $O_{l,k}(\frac{n+m_1}{m_1}q^{lm_p})$.
  
  Note that, as both $f_C$ and $g_C$ are independent of $y_l$, and by the induction hypothesis,
  we have 
  \begin{align*}
  \NC_l(f_C,g_C) = q^{m_p}\NC_{l-1}(f_C,g_C) 
  &= q^{m_p}O_{l-1,k}\left(\frac{n+m_1}{m_1}q^{(l-1)m_p}\right) \\
  &= O_{l,k}\left(\frac{n+m_1}{m_1}q^{lm_p}\right).  
  \end{align*}
  
  For both $\NC_l(f_C,g_I)$, $\NC_l(f_I,g_C)$, we have one polynomial in $A_l$ and the second
  indivisible by any polynomial in $A_l$. We wish to bound $\NC_l(f_I,g_I)$ by a term of this
  form as well. To do so, let $R = \rest_{y_l}(f_I,g_I) \in A_l$ be the resultant of $f_I,g_I$.
  By basic properties of the resultant, for any choice of $y_i \in \fq[t], P \in \PR$,
  we have $P \mid f_I(y), P \mid g_I(y) \implies P \mid R(y)$.
  Thus $\NC_l(f_I,g_I) \le \NC_l(f_I,R)$.
  Further note that from $\deg_{y_l}(f_I),\deg_{y_l}(g_I) \le k$ it follows that 
  $R$ is given as a polynomial of degree $\le 2k$ in the $A_l$ coefficients of $f_I,g_I$,
  Hence in particular $\deg_{t}(R) \le 2kn$, $\deg_{y_i}(R) \le 2k^2$. 
  Also note that $R$ is non-zero, as $f_I,g_I$ are co-prime.
  
  We now claim that for any polynomials $R \in A_l, f \in A_l[y_l]$ such that $f$ is
  indivisible by non-scalar polynomials in $A_l$, and with $\deg_t f \le n, \deg_{y_i} f \le k$
  and $\deg_t R \le 2kn, \deg_{y_i} R \le 2k^2$, we have 
  $\NC_l(f,R) = O_{l,k}(\frac{n+m_1}{m_1}q^{lm_p})$. 
  This bound would then be applicable to $\NC_l(f_I,g_C)$, $\NC_l(g_I,f_C)$ and $\NC_l(f_I,g_I)$,
  finishing our induction step.

  Let $R = \prod_{j \in J} R_j$ be $R$'s decomposition into irreducible polynomials.
  We have $\NC_l(f,R) \le \sum_{j \in J} \NC_l(f,R_j)$. 
  Note that for each $j$, $R_j \in A_l$, therefore $R_j \nmid f$ and $f,R_j$ are coprime.
  Let us partition $J = J_1 \cup J_2 \cup J_3$, where $J_1 = \{j \in J : R_j \notin \fq[t]\}$,
  $J_2 = \{j \in J : R_j \in \fq[t]^{\ge m_1}\}$,
  and  $J_3 = \{j \in J : R_j \in \fq[t]^{< m_1}\}$.
  As $\deg_t R \le 2kn$ and the total degree of $R$ in all  $y$-variables is at most $2k^2l$, 
  we have $\#J_2 \le \frac{2kn}{m_1}, \#J_1 \le 2k^2l$.
  
  For each $j \in J_3$, $y \in B_l$, we have $R_j(y) = R_j$, so clearly 
  $\nexists P \in \PR^{\ge m_1}$  with $P \mid R_j$, hence $\NC_l(f,R_j) = 0$.
  Similarly, for each $j \in J_2$, the conditions of proposition \ref{B_k P bound}
  are satisfied for $f,P=R_j$. Hence $\NC_l(f,R_j) \le k(l+1)q^{lm_p} = O_{l,k}(q^{lm_p})$.

  Finally, for each $j \in J_1$, let $f_0 \in A_l$ be some 
  coefficient of $f$ (as a polynomial in $y_l$) such that $R_j \nmid f_0$ as polynomials.
  Such a coefficient must exist as $R_j \nmid f$. We now bound $\NC_l(f,R_j)$,
  again by splitting into three trivially covering sets:
  \begin{align*}
   & \NC_l(f,R_j)  = \#\{y \in B_l : \exists P \in \PR^{\ge m_1}, P \mid f(y) \text{ and } P \mid R_j(y)\} \\
    & \le  \#\{y \in B_l : R_j(y) = 0\} \\
    & + \#\{y \in B_l : \exists P \in \PR^{\ge m_1}, P \mid f_0(y) \text{ and } P \mid R_j(y)\} \\
    & + \#\{y \in B_l : R_j(y) \neq 0, \exists P \in \PR^{\ge m_1}, P \mid R_j(y), P \mid f(y)  \text{ and } P \nmid f_0(y)\}.
  \end{align*}
  
  By proposition \ref{B_k 0 bound}, the first summand is clearly $O_{l,k}(q^{lm_p})$.
  The second summand, by definition, is $\NC_l(f_0,R_j)$.  As $R_j$ is irreducible,
  it follows that $f_0, R_j$ are coprime. We also certainly have 
  $\deg_{y_i}(f_0),\deg_{y_i}(R_j) \le 2k^2$ and 
  $\deg_t(f_0),\deg_t(R_j) \le 2kn$. Therefore $f_0,R_j$ satisfy the conditions 
  of proposition \ref{B_k bound}, but with smaller $l$ (albeit larger degrees).
  Hence by the induction hypothesis, 
  \begin{align}
   \nonumber \NC_l(f_0,R_j) = q^{m_p}\NC_{l-1}(f_0,R_j)
   & = q^{m_p}O_{l-1,2k^2}\left(\frac{2kn+m_1}{m_1}q^{(l-1)m_p}\right)
  \\ & = O_{l,k}\left(\frac{n+m_1}{m_1}q^{lm_p}\right).
  \end{align}
  
  To bound the third term, note that for each $y = (y',y_l) \in B_{l-1} \times B_0 =  B_l$ such that
  $R_j(y) = R_j(y') \neq 0$, we must have $\deg_t(R_j(y')) \le 2kn + 2k^2lm_p$.
  If we let $\PR_{y'} = \{P \in \PR^{\ge m_1} : P \mid R_j(y'), P \nmid f_0(y')\}$, it follows that
  $\#\PR_{y'} \le \frac{2kn+2k^2lm_p}{m_1} = O_{l,k}(\frac{n+m_1}{m_1})$.
  On the other hand, for each $y' \in B_{l-1}, P \in \PR_{y'}$, 
  $f(y',y_l)$ is a polynomial of degree $\le k$ in $y_l$, which is non-vanishing
  modulo $P$. Since $\deg_t(P) \ge m_1 \ge m_p$, it follows that 
  $\#\{y_l \in B_0 : P \mid f(y',y_l)\} \le k$. Therefore
  \begin{align*}
   &\#\{y \in B_l : R_j(y) \neq 0, \exists P \in \PR^{\ge m_1}, P \mid R_j(y), P \mid f(y)  \text{ and } P \nmid f_0(y)\}
   \\ = &\sum_{\substack{y' \in B_{l-1}\\ R_j(y') \neq 0}} 
           \#\{y_l \in B_0:  \exists P \in \PR^{\ge m_1}, P \mid R_j(y'), P \mid f(y',y_l)  \text{ and } P \nmid f_0(y')\}
   \\ \le &\sum_{\substack{y' \in B_{l-1}\\ R_j(y') \neq 0}}
           \sum_{P \in \PR_{y'} } \#\{y_l \in B_0 : P \mid f(y',y_l)\}
      \le \sum_{\substack{y' \in B_{l-1}\\ R_j(y') \neq 0}}
           \sum_{P \in \PR_{y'} } k
   \\ = &\sum_{\substack{y' \in B_{l-1}\\ R_j(y') \neq 0}} O_{l,k}\left(\frac{n+m_1}{m_1}\right)
      =  O_{l,k}\left(\frac{n+m_1}{m_1} q^{lm_p}\right).
  \end{align*}
  
  Taking the three results together, we find $\NC_l(f,R_j) = O_{l,k}(\frac{n+m_1}{m_1} q^{lm_p})$
  for all $j \in J_1$. Now combining the different bounds for each $J_i$, we finally obtain
  
  \begin{align*}
   \NC_l(f,g) & \le \sum_{j \in J_1} \NC(f,R_j) + \sum_{j \in J_2} \NC(f,R_j) + \sum_{j \in J_3} \NC(f,R_j)
   \\ & = \sum_{j \in J_1}  O_{l,k}\left(\frac{n+m_1}{m_1} q^{lm_p}\right) + 
            \sum_{j \in J_2} O_{l,k}\left(q^{lm_p}\right) + \sum_{j \in J_3} 0
   \\ & \le 2k^2l \cdot O_{l,k}\left(\frac{n+m_1}{m_1} q^{lm_p}\right) + 
            \frac{2kn}{m_1} \cdot O_{l,k}\left(q^{lm_p}\right)
   \\ & = O_{l,k}\left(\frac{n+m_1}{m_1} q^{lm_p}\right),
  \end{align*}
  as we wanted to show.

\end{proof}

\section{Proof of Theorems \ref{short interval thm}, \ref{unif thm}}
\subsection{Proof of Theorem \ref{short interval thm}}
Define $f(x) = g(t,N(t)+x) \in \fq[t][x]$. Clearly $\deg_x f = \deg_x g = k$, and
$\deg_t f \le \deg_x g \cdot \deg_t N + \deg_t g = kn + \deg_t g = O(n)$. Furthermore,
as $f$ is obtained from $g$ simply by a fixed $\fq[t]$ translation of the $x$ variable,
$g$ being square-free implies that $f$ is square-free, and more importantly,
$\rho_f(D) = \rho_g(D) = \rho(D)$ for any polynomial $D$. Therefore they also have
the same singular sum and series, i.e. $S_f(m_0) = S_g(m_0)$, as well as $S_f = S_g$ and $c_f = c_g$
being constants independent of the choice of $N(t)$ or its degree $n$. Thus taking any
\iftoggle{omegas}
{$m_0 = \omega(1), r = \omega(1)$}
{$m_0 \rinf, r \rinf$}, we have immediately
$S(m_0) = o(1) = o(c_f)$ and $\frac{r}{S} \rinf$, from which we obtain $\#N' = c_f q^m (1+o(1))$
and $\#N'' = o(c_f q^m)$ following the proofs in sections \ref{N' sect}, \ref{N'' sect}.
To be able to choose such $r,m_0$, we only need 
\iftoggle{omegas}
{$m = \omega(1)$}
{$m \rinf$}.

We are left only with the need to validate the bound on $N'''$, and here finally $n$ does 
come into play, as it still affects the relevant degrees. As $c_f$ is now a constant,
\eqref{N''' est} implies that $\#N''' = o(1) = o(c_f)$ when 
\iftoggle{omegas}
{$mq^{m/p} = \omega(n)$, which is equivalent to 
$m = p(\log_qn - \log_q\log_qn) + \omega(1)$,}
{$\frac{mq^{m/p}}{n} \rinf$, which is equivalent to 
$m - p(\log_qn - \log_q\log_qn) \rinf$,}
as we required in the theorem's statement. \qed

\subsection{Proof of Theorem \ref{unif thm}}
Similarly to the above, we observe that when we move to $f(x) = g(N(t)+x)$, the expressions
determined by the singular sum, $S,S(m_0)$ and $c_f$, will depend only on $g$ and not on $N$.
Thus the bounds \eqref{S bound}--\eqref{c_f bound} will all be valid with $n$ replaced by $n_1$,
as will the computations of sections \ref{N'' sect}, \ref{N' sect}, as long as we may choose
$r,m_0 \rinf$ with 
\iftoggle{omegas}
{$r = \omega(\log_q\log_q n_1)$, $m_0 = \omega(\log_q n_1)$ and $2m_0r \le m$,
which can be satisfied due to $m = \omega(\log_qn_1 \log_q\log_qn_1)$.}
{$m_0 \gg \log_q n_1$, $r \gg \log_q\log_q n_1$ and $2m_0r \le m$,
which is possible due to the assumption $m \gg \log_qn_1 \log_q\log_qn_1$.}

For the bound on $\#N'''$, we observe that $\deg_t f \le kn_2 + n_1$. If $n_2 \ll n_1$, then
$\deg_t f \ll n_1$ and we are basically in the case of Theorem \ref{brun thm}, where the
contribution of $N'''$ is negligible. Otherwise, $n_2$ is much greater than $n_1$, so 
$\deg_t f \ll n_2$. Thus \eqref{N''' est} holds with the degree $n$ replaced by $n_2$. Taken 
together with \eqref{c_f bound} with $n$ replaced by $n_1$, we
see that $\#N''' = o(c_fq^m)$ would follow from 
\iftoggle{omegas}
{$mq^{m/p} = \omega(n_2 (\log_q n_1)^{2k})$, which is equivalent to 
$m = p(\log_q n_2 - \log_q \log_q n_2 + 2k\log_q\log_qn_1) + \omega(1)$,}
{$\frac{mq^{m/p}}{n_2 (\log_q n_1)^{2k}} \rinf$, which is equivalent to 
$m - p(\log_q n_2 - \log_q \log_q n_2 + 2k\log_q\log_qn_1) \rinf$,}
as we required. \qed

\begin{remark}
We can in fact make a slight improvement here on the required condition: By using 
$c_f \gg (\log_q n_1)^{-k - o(1)}$ instead of \eqref{c_f bound}, 
the constant coefficient $2k$ can be replaced with any constant greater than $k$, or with some (specific)
function of the type $k + o(1)$.
\end{remark}

\section*{Acknowledgements}

The author wishes to thank Ze\'ev Rudnick for proposing the problems which motivated
the majority of this paper, as well as many helpful suggestions and references. The author
also thanks Alexei Entin, for contributing the appendix, and for suggesting the 
use of Brun's sieve, without which the results of this paper would have been far weaker.



\newpage 

\appendix


\edef\shortauthors{Alexei Entin}
\edef\authors{Alexei Entin}
\title[Squarefree Integers in Short Intervals]
{Appendix: On the Number of Squarefree Integers in Short Intervals}

\date{}
\edef\thankses{}

\maketitle
\begin{abstract} Assuming the ABC conjecture we show that for any fixed
$\eps>0$ the number of squarefree integers in the interval $\lbb x,x+H\rb$ is
$\sim \frac{6}{\pi^2}H$ provided $H>x^\eps$. 

\end{abstract}
\refstepcounter{section}
\subsection{Introduction}

We consider the problem of counting the number of squarefree integers in the 
interval $\lbb x,x+H\rbb$, where $x$ and $H$ are large positive real numbers.
We are interested in the case that $H=x^\eps$ for some fixed $\eps>0$ while
$x\to\ity$. It is an open problem to show that for any fixed $\eps>0$ there
exists even a single squarefree integer in the interval $\lbb x,x+H\rbb$ with
$H=x^\eps$ for large enough $x$. The best known result in this direction is due
to Filaseta and Trifonov \cite{Filatri} who showed the existence of squarefree
integers in $\lbb x,x+H\rbb$ for $H\gg x^{1/5}\log x$. It was shown by Tolev
\cite{Tolev} 
that when $\frac{H}{x^{1/5}\log x} \to\ity$, the number of squarefrees in the
interval $\lbb x,x+H\rbb$ is in fact asymptotic to $(6/\pi^2)H$.
It was shown by Granville \cite{Granville} that assuming the ABC conjecture 
for any fixed $\eps>0$ there exists a squarefree integer in 
$\lbb x,x+x^\eps\rbb$ for $x$ large enough. Our main result is the following:

\begin{thm}\label{main} Assume the ABC conjecture. Let $\eps>0$ be fixed.
Then the number of squarefree integers in the interval $\lbb x,x+H\rb$ is
$\sim \frac{6}{\pi^2}H$ provided $H>x^\eps$.\end{thm}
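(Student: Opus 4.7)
The plan is to combine a standard M\"obius-sieve reduction with Granville's ABC-based control on integers having a large square divisor. Starting from the identity $\mu^2(n)=\sum_{d^2\mid n}\mu(d)$, I would write
\begin{equation*}
\sum_{x\le n\le x+H}\mu^2(n) \;=\; \sum_{d\ge 1}\mu(d)\,\#\{n\in[x,x+H]:d^2\mid n\},
\end{equation*}
and split the outer sum at a threshold $y=x^\delta$, where $\delta\in(0,\eps)$ is a small parameter to be chosen in terms of $\eps$ at the end.

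For the small-divisor range $d\le y$, the elementary count $\#\{n\in[x,x+H]:d^2\mid n\}=H/d^2+O(1)$ yields the main term
\begin{equation*}
\sum_{d\le y}\mu(d)\bigl(H/d^2+O(1)\bigr)\;=\;\frac{H}{\zeta(2)}+O(H/y)+O(y)\;=\;\frac{H}{\zeta(2)}+o(H),
\end{equation*}
since $H/y=Hx^{-\delta}$ and $y=x^\delta$ are both $o(H)$ as soon as $0<\delta<\eps$.

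The work is in bounding the tail $d>y$, whose contribution in absolute value is at most
\begin{equation*}
\mathcal{E}(x,H;y)\;:=\;\#\{n\in[x,x+H]\;:\;\exists\, b>y,\ b^2\mid n\}.
\end{equation*}
Writing each such $n$ uniquely as $n=ab^2$ with $a$ squarefree, one notes $\mathrm{rad}(n)\le a\,b\le n/b\le x^{1-\delta}$, so every ``bad'' $n$ in the interval has atypically small radical for its size. Here I would invoke the ABC-based lemma in the spirit of Granville \cite{Granville}: for each fixed $\eps>0$ and all sufficiently small $\delta>0$,
\begin{equation*}
\mathcal{E}(x,H;x^\delta)\;=\;o(H)\qquad\text{whenever }H>x^\eps.
\end{equation*}
Combined with the main term, this yields the claimed asymptotic for any $\delta$ taken small enough in terms of $\eps$.

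The main obstacle is establishing this ABC bound on $\mathcal{E}$. The strategy is a dyadic decomposition: for $b>\sqrt{H}$ each $b$ contributes at most one $n$ to $\mathcal{E}$, so the task reduces to counting $b\in(x^\delta,\sqrt{x+H}\,]$ for which $[x/b^2,(x+H)/b^2]$ meets $\Z$. If two distinct such pairs $(a_1,b_1),(a_2,b_2)$ produce $n_i=a_ib_i^2\in[x,x+H]$, then ABC applied to $a_1b_1^2-a_2b_2^2=r$ with $|r|\le H$ (after extracting the gcd and arranging coprimality) constrains the product $b_1b_2$; summing carefully over dyadic ranges of $b$ forces the count to be $o(H)$. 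The delicate technical points are managing the coprimality conditions uniformly in $b$ and covering the entire range of $b$ by the ABC-derived constraints with sufficient savings.
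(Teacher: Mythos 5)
Your overall shape---exact sieving over small moduli, ABC to kill the contribution of large square divisors---is the right one and matches the paper's at a high level, but there is a genuine gap: you never actually prove the ABC lemma that your argument rests on, and the sketch you offer is not the route the paper takes and is far from complete.

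Two concrete issues. First, a minor one: the Möbius tail $\sum_{d>y}\mu(d)\,\#\{n\in[x,x+H]:d^2\mid n\}$ is \emph{not} bounded in absolute value by $\mathcal{E}(x,H;y)$. Swapping the order, the inner quantity $\sum_{d>y,\,d^2\mid n}\mu(d)$ is supported on non-squarefree $n$ but can have size up to $\tau(b)$ where $b^2$ is the largest square dividing $n$, not $O(1)$. This is fixable (work with the two-sided inclusion $N\subseteq N'\subseteq N\cup\{n:\exists p>y,\ p^2\mid n\}$ as in the body of the paper, or sieve only over squares of primes), so it is an oversight rather than a fatal flaw, but the bound as written is false. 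Second, and decisively: the crucial estimate $\mathcal{E}(x,H;x^\delta)=o(H)$ is stated but not proved. Your observation that any $n=ab^2$ with $b>x^\delta$ satisfies $\operatorname{rad}(n)\le n/b\le x^{1-\delta}$ is correct and identifies the ABC-relevant structure, and the trivial bound handles $b\le\sqrt{H}$; but for $b>\sqrt{H}$ your suggestion---apply ABC to differences $a_1b_1^2-a_2b_2^2=r$ after arranging coprimality, then sum over dyadic ranges---is exactly where the work is, and you explicitly flag the coprimality management and summation as ``delicate technical points'' without resolving them. As it stands this is a restatement of the difficulty, not a proof.

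The paper's Proposition~\ref{sievelarge} handles this step differently and cleanly. It argues by contradiction: if a positive proportion $\lambda$ of the interval were divisible by $p^2$ for some prime $p>x^\eps$, then with $N=\lceil 2/\eps\rceil$ and $M=\lceil 2N/\lambda\rceil$ a pigeonhole over subintervals of length $M$ produces, for arbitrarily large $y$, a fixed tuple of shifts $a_1,\dots,a_N$ such that each $y+a_i$ is divisible by $p_i^2$ with $p_i\gg y^\eps$. Then $F(X)=\prod_i(X+a_i)$ is a \emph{fixed} squarefree polynomial whose value $F(y)$ is divisible by $s^2$ with $s=p_1\cdots p_N\gg y^{N\eps}\ge y^2$, contradicting Granville's Corollary~2.1 (Proposition~\ref{propgran}), which under ABC forces $s\le y^{1+\al}$ for any fixed $\al<1$. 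This sidesteps entirely the pairwise-difference ABC application and its coprimality bookkeeping: one invokes the polynomial version of ABC once, for a fixed $F$, rather than trying to extract cancellation from many binary ABC triples. If you want to salvage your route, you would need to carry out the dyadic ABC argument in detail; but the pigeonhole-to-a-fixed-polynomial reduction is both shorter and already available in Granville's framework, and is what the paper uses.
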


We note that $6/\pi^2=\ze(2)^{-1}$, where $\ze(s)$ is the Riemann zeta-function.
By essentially the same argument it can be shown that assuming the ABC
conjecture for any fixed $k$ the number of $k$-power-free integers in
$\lbb x,x+H\rb$ is $\sim \ze(k)^{-1}H$ provided $H>x^\eps$ for fixed $\eps>0$.

%
%

\subsection{Proof of Theorem \ref{main}}

\begin{prop}\label{sievesmall} The number of integers in the interval 
$\lbb x,x+H\rb$ which are not divisible by any square of a prime $p<H$ 
is $\sim \frac{6}{\pi^2}H$ as $H\to\ity$.\end{prop}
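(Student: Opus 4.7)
The plan is to apply a truncated M\"obius sieve: sieve only by primes below a slowly growing parameter $y = y(H)$ and then bound the contribution of the ``medium'' primes $p \in [y,H)$ by a crude union bound, using that each $p^2$ with $p \ge y$ has few multiples in $(x,x+H]$.

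Choose $y = y(H)$ tending to infinity slowly enough that $2^{\pi(y)} = o(H)$; for instance $y = \log\log H$ works, since then $2^{\pi(y)} = (\log H)^{o(1)}$. Setting $Q_y = \prod_{p < y} p$ and
\[
S_y := \#\{n \in (x,x+H] : p^2 \nmid n \text{ for all primes } p < y\},
\]
M\"obius inversion gives
\[
S_y = \sum_{d \mid Q_y} \mu(d)\left(\frac{H}{d^2} + O(1)\right) = H \prod_{p < y}\left(1-\frac{1}{p^2}\right) + O(2^{\pi(y)}).
\]
The Euler product converges to $\zeta(2)^{-1} = 6/\pi^2$ as $y \rinf$, and the inclusion--exclusion error is $o(H)$ by the choice of $y$; hence $S_y = \tfrac{6}{\pi^2}H + o(H)$.

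Let $S$ denote the count in the proposition. Since sieving by more primes only removes integers, $S \le S_y$, and the difference is controlled by a union bound over the missing medium primes:
\[
0 \le S_y - S \le \sum_{y \le p < H}\#\{n \in (x,x+H] : p^2 \mid n\} \le \sum_{y \le p < H}\left(\frac{H}{p^2}+1\right) = O(H/y) + O(H/\log H),
\]
which is $o(H)$. Combining gives $S = \tfrac{6}{\pi^2}H + o(H)$, uniformly in $x$, as required.

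This is a routine truncated small sieve; the only point to balance is the choice of $y$, which must simultaneously tend to infinity (so that both $H/y = o(H)$ and the Euler product reaches its limit) and grow slowly enough that $2^{\pi(y)} = o(H)$. Any $y$ of order $\log\log H$ --- or indeed any $y \rinf$ with $y = o(\log H)$, so that the trivial bound $\pi(y) \le y$ already yields $2^{\pi(y)} = H^{o(1)}$ --- satisfies all these constraints, so there is no real obstacle to the argument.
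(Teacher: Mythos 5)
Your proof is correct and follows the same two-step approach as the paper: exact sieving by squares of primes below a small threshold, followed by a union bound over the medium primes up to $H$. The only cosmetic difference is the cutoff (you use $y=\log\log H$, the paper uses $\tfrac12\log H$), but both render the $2^{\pi(y)}$ inclusion--exclusion error $o(H)$, so the arguments coincide in substance.
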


\begin{proof} It is elementary to see that the number of integers in
$\lbb x,x+H\rb$ not divisible by $p^2$ for any $p<\frac{1}{2}\log H$
is $\sim \ze(2)^{-1}H=\frac{6}{\pi^2} H$ (this is seen by exact sieving over
all primes up to $\frac{1}{2}\log H$). The number of integers in
$\lbb x,x+H\rb$ divisible by $p^2$ for some $\frac{1}{2}\log H<p<H$ is bounded by 
$$\sum_{\frac{1}{2}\log H<p<H}\lb \frac{H}{p^2}+1\rb\ll \frac{H}{\log H}=o(H),$$
which is asymptotically negligible.\end{proof}

We will need the following result due to Granville \cite[Corollary 2.1]{Granville}:

\begin{prop}\label{propgran} Assume the ABC conjecture. 
Let $F(X)\in\Z[x]$ be a fixed squarefree polynomial and $\al>0$ a fixed 
constant. Let $y$ be a natural number and assume that $s^2|F(y)$ for some
natural number $s$. Then for $y$ large enough we have
$s\le y^{1+\al}$.\end{prop}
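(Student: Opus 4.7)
The plan is to reduce Proposition \ref{propgran} to a lower bound on the radical of $F(y)$, and to derive that bound from the ABC conjecture. Writing $\operatorname{rad}(n)$ for the product of the distinct primes dividing $n$, and supposing $s^2\mid F(y)$, set $F(y)=s^2 m$. Every prime divisor of $F(y)$ divides either $s$ or $m$, so
\[
   \operatorname{rad}(F(y))\le s\cdot\operatorname{rad}(m)\le s\cdot|m|=\frac{|F(y)|}{s}.
\]
Since $\deg F=d$, $|F(y)|\ll_F y^d$, and therefore
\[
   s\le\frac{|F(y)|}{\operatorname{rad}(F(y))}\ll_F\frac{y^d}{\operatorname{rad}(F(y))}.
\]
Thus the proposition follows from the claim that, for every $\delta>0$, the ABC conjecture implies $\operatorname{rad}(F(y))\gg_{F,\delta}y^{d-1-\delta}$ for all $y$ large enough: picking $\delta<\al$ then gives $s\ll y^{1+\delta}\le y^{1+\al}$ once $y$ is sufficiently large.

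The main step is therefore the radical lower bound. Squarefreeness of $F$ gives $\gcd(F,F')=1$ in $\Q[X]$ and a B\'ezout identity $A(X)F(X)+B(X)F'(X)=R$ with $R=\pm\disc(F)\ne 0$, which bounds $\gcd(F(y),F'(y))$ by the fixed integer $|R|$. Following the template of Elkies's derivation of Roth's theorem from ABC (and Granville's adaptation to polynomial values), I would exhibit an integer ABC triple by specialising a polynomial identity of the shape $P(X)F(X)^a+Q(X)G(X)^b=H(X)$, where the auxiliary $G$ is built from the factorisation of $F$ over $\overline{\Q}$, and the exponents $a,b$ together with $\deg P,\deg Q,\deg H$ are tuned so that ABC applied at $X=y$ forces $\operatorname{rad}(F(y)G(y)H(y))\gg_{\eps}y^{d-1-\eps}$. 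Extracting the $F(y)$ factor --- noting that the auxiliaries contribute only $y^{O_F(1)}$ to the radical --- then yields the required lower bound on $\operatorname{rad}(F(y))$.

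The hard part is constructing this auxiliary identity: one must arrange both that the three terms of the triple are pairwise coprime up to divisors of a fixed integer (controlled via $R$ and resultants involving $F,G,H$), and that the exponent numerology extracts the sharp $d-1$ rather than a weaker $d-2$ --- the latter would only yield $s\le y^{2+\al}$, too weak for the proposition. This is precisely where squarefreeness of $F$ --- not merely the absence of repeated irreducible factors, but the coprimality of $F$ and $F'$ --- is essential. Once the identity is in place, invoking ABC with $\eps$ small in terms of $\delta$ and $\al$ and feeding the result into the reduction above is routine bookkeeping.
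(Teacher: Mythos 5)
The paper does not actually prove this proposition: it is quoted verbatim as Granville's Corollary~2.1 from \cite{Granville} and used as a black box, so there is no in-paper argument against which to compare your proposal line by line. What I can assess is whether your proposal constitutes a proof, and whether your outline matches the route Granville actually takes.

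Your opening reduction is correct and is indeed how Granville deduces Corollary~2.1 from his Theorem~1. Writing $F(y)=s^2m$, the observation $\operatorname{rad}(F(y))\le\operatorname{rad}(s)\operatorname{rad}(m)\le s|m|=|F(y)|/s$, hence $s\le|F(y)|/\operatorname{rad}(F(y))\ll_F y^d/\operatorname{rad}(F(y))$, is clean; combined with the bound $\operatorname{rad}(F(y))\gg_{F,\delta}y^{d-1-\delta}$ and $\delta<\al$, it gives the claim for $y$ large. So the elementary shell of the argument is sound.

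The genuine gap is that the bound $\operatorname{rad}(F(y))\gg_{F,\delta}y^{d-1-\delta}$ under ABC is essentially of the same depth as the proposition itself --- it \emph{is} Granville's Theorem~1 (and Langevin's independent result) --- and your proposal does not prove it. You correctly identify that it requires an Elkies/Belyi-type auxiliary polynomial identity, but then explicitly label that construction ``the hard part'' and leave it unexecuted. To see that this is not mere bookkeeping: take $F(x)=x(x+1)(x+2)$, $d=3$. Direct applications of ABC to the triples $y+1=(y+1)$ or $(y+1)+1=(y+2)$ only give $\operatorname{rad}(F(y))\gg y^{1-\eps}$, not the required $y^{2-\eps}$; the jump from exponent $1$ to exponent $d-1$ is exactly what the auxiliary construction is for, and there is no shortcut via $\gcd(F,F')=1$ and B\'ezout alone. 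So what you have is a correct reduction plus a citation-by-another-name of the substantive theorem. In the context of this appendix that is actually the same amount of content as the paper supplies (a citation), but as a standalone proof of Proposition~\ref{propgran} it is incomplete: the one nontrivial implication of ABC it relies on is asserted, not established.
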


\begin{prop}\label{sievelarge} Assume the ABC conjecture.
If $H<x$ and $H\to\ity$ then the number of integers in $\lbb x,x+H\rb$
divisible by the square of any prime $p>x^\eps$ is $o(H)$.\end{prop}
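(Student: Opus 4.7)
The plan is to bound
$$T := \#\{n \in \lbb x, x+H \rb : \exists\,p > x^\eps \text{ prime with } p^2 \mid n\}$$
by splitting the contribution according to whether the prime $p$ lies in the range $x^\eps < p \leq \sqrt H$ or $p > \max(x^\eps,\sqrt H)$, handling the first via a trivial divisor sieve and the second via the ABC-based Proposition \ref{propgran}.

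For the moderate range $x^\eps < p \leq \sqrt H$ (possibly vacuous if $x^\eps \geq \sqrt H$), each prime contributes at most $\lfloor H/p^2 \rfloor + 1 \leq 2H/p^2$ elements to $T$. Summing over such primes and using the standard tail bound $\sum_{p > N} p^{-2} = O\!\left((N\log N)^{-1}\right)$ yields a total contribution of $O(H/(x^\eps \log x)) = o(H)$.

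For primes $p > \max(x^\eps, \sqrt H)$, we have $p^2 > H$, so each such prime divides at most one $n \in \lbb x, x+H \rb$. When $H \gg \sqrt{x}$ the naive bound $\pi(\sqrt{2x}) = O(\sqrt{x}/\log x) = o(H)$ suffices, so the content lies in the complementary regime $H = o(\sqrt{x})$. Writing each relevant $n$ as $n = p^2 m$, hence $p^2 m = x + k$ with $k \in \lbb 0, H \rb$ and $m \geq 1$, I feed the resulting Diophantine relation into Proposition \ref{propgran}: applying Granville's corollary to a suitable fixed squarefree polynomial $F$ of degree at least $3$ evaluated at an appropriate $y = y(m,k,x)$, one obtains $p \leq y^{1+\al}$, which together with the size constraints $p \geq x^\eps$, $m \leq 2 x^{1-2\eps}$, $k \leq H$ forces the number of admissible pairs $(p,m)$ to be $o(H)$. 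Combining the two range estimates yields $T = o(H)$.

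The main obstacle is this last ABC step: one must choose the fixed polynomial $F$ and the encoding $y$ so that $p^2 \mid F(y)$ follows automatically from the relation $p^2 m = x+k$ and so that Granville's bound becomes genuinely non-trivial. Since Proposition \ref{propgran} is vacuous for $\deg F < 3$, low-degree choices like $F(Y)=Y$ are insufficient; a cubic such as $F(Y) = Y(Y-1)(Y-2)$, applied to a $y$ built from $pm$ or $(x+k)/p$, is a plausible candidate. One must further dispatch the coprimality/gcd conditions between $x$, $k$, $m$, and $p$ needed to apply ABC in the form of Proposition \ref{propgran}, verifying that the non-coprime exceptions form a set of size $O(\log x) = o(H)$.
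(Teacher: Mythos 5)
Your preliminary reductions are sound: the trivial sieve over $x^\eps < p \leq \sqrt H$ gives $O(H/(x^\eps\log x)) = o(H)$, and when $H \gg \sqrt x$ the bound $\pi(\sqrt{2x}) = o(H)$ disposes of the large primes at once, leaving only $H = o(\sqrt x)$ as the substantive case. But the proof stalls exactly where you flag it. There is no choice of a fixed $F$ and a single encoding $y = y(m,k,x)$ that turns $p^2m = x+k$, for one integer at a time, into a non-vacuous application of Proposition \ref{propgran}. With a single prime, the most one can extract from $p^2 \mid n$ and $n \leq 2x$ is $p \leq (2x)^{1/2}$, and any plug-in of the form $p^2 \mid F(y)$ with $y \asymp x^\theta$ yields via Granville only $p \leq y^{1+\al}$, which for every sensible $y$ is weaker than the trivial square-root bound. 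Cubic choices like $F(Y)=Y(Y-1)(Y-2)$ do not help: you still have only the single factor $p^2$ available, and Granville's conclusion is about the largest square divisor of $F(y)$ being small relative to $y$, not about a single prime square.

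The idea you are missing — and it is the entire content of the paper's proof — is that one must combine \emph{many} bad integers in a bounded window before invoking Granville. Arguing by contradiction from a lower bound of $\lambda H$ bad integers, the paper extracts (pigeonhole stage one) a subinterval $\lbb y, y+M\rbb$ of \emph{fixed} length $M = M(\eps,\lambda)$ containing at least $N = \lceil 2/\eps\rceil$ bad integers, and then (pigeonhole stage two) a \emph{fixed} pattern of offsets $a_1,\dots,a_N \in \{0,\dots,M\}$ such that for infinitely many $y$, each $y+a_i$ is divisible by $p_i^2$ with $p_i \gg y^\eps$. Only now is the polynomial fixed: $F(X) = \prod_i(X + a_i)$, and the divisor $d = p_1\cdots p_N$ (distinct for large $y$, since equal $p_i$ would force $p_i^2 \mid a_i - a_j$) satisfies $d^2 \mid F(y)$ with $d \gg y^{N\eps} \geq y^2$, contradicting Proposition \ref{propgran} with any $\al < 1$. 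Aggregating $N$ primes to push $d$ past $y^{1+\al}$ is precisely what a single-$n$ encoding cannot do, and without it the ABC input has no traction.
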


\begin{proof} Let $\lam>0$ be a constant. Assume that the number of integers
in $\lbb x,x+H\rb$ divisible by $p^2$ for some prime $p>x^\eps$ is $>\lam H$.
We want to show that $H$ must be bounded (for any fixed $\lam$). Denote 
$N=\ceil{2/\eps}, M=\ceil{2N/\lam}$ (these are both fixed constants for
fixed $\eps,\lam$). The interval $\lbb x,x+H\rb$ necessarily contains a 
subinterval $\lbb y,y+M\rb$ with at least $\frac{1}{2}\lam M\ge N$ (if 
$M$ divides $H$ the $\frac{1}{2}$ factor is unnecessary) elements divisible
by some $p^2$ for some prime $p>x^\eps\gg y^\eps$.

Assuming by way of contradiction that $H$ can be arbitrarily large, we see
that there must exist arbitrarily large $y$ s.t. at least $N$ integers in
the interval $\lbb y,y+M\rb$ are divisible by a square of some prime
$p\gg y^\eps$. By the pigeonhole principle there must exist some fixed 
distinct $a_1,...,a_N\ge 0$ s.t. for infinitely many $y$ each $y+a_1,...,y+a_N$
is divisible by the square of some prime $p\gg y^\eps$.

Denote $F(X)=(X+a_1)...(X+a_N)\in\Z[x]$. This is a squarefree polynomial.
From the above we see that for infinitely many $y$ the value $F(y)$ is
divisible by the square of some $d=p_1...p_N\gg y^{N\eps}\ge y^2$. But this
contradicts Proposition \ref{propgran} (taking any $\al<1$ in the proposition).
\end{proof}

Combining Proposition \ref{sievesmall} and Proposition \ref{sievelarge}
we deduce Theorem 1.



\newpage

\edef\shortauthors{Dan Carmon}
\edef\authors{Dan Carmon}
\author{Dan Carmon}
\title[Square-free values of large polynomials]
{On square-free values of large polynomials 
over the rational function field}
\refreshheaders

\bibliography{large_coefs}{}
\bibliographystyle{hacm}
\setaddresses

\end{document}